 \def\lcm{\operatorname*{lcm}}
\newtheorem{theorem}{Theorem}[section]
\newtheorem{corollary}[theorem]{Corollary}
\newtheorem{lemma}[theorem]{Lemma}
\newtheorem{proposition}[theorem]{Proposition}
\newtheorem{conjecture}[theorem]{Conjecture}
\theoremstyle{definition}
\newtheorem{definition}[theorem]{Definition}
\newtheorem{remark}[theorem]{Remark}
\newtheorem*{question}{Question}
\newtheorem{example}[theorem]{Example}
\theoremstyle{remark}
\renewcommand{\theclaim}{\textup{\theclaim}}
\newtheorem*{acknowledgements}{Acknowledgements}
\numberwithin{equation}{section}
\def\openone
\newbox\ipbox
\newcommand{\diracb}[1]{\left\langle #1\mathrel{\mathchoice

{\setbox\ipbox=\hbox{$\displaystyle \left\langle\mathstrut
#1\right.$}

\vrule height\ht\ipbox width0.25pt depth\dp\ipbox}

{\setbox\ipbox=\hbox{$\textstyle \left\langle\mathstrut
#1\right.$}

\vrule height\ht\ipbox width0.25pt depth\dp\ipbox}

{\setbox\ipbox=\hbox{$\scriptstyle \left\langle\mathstrut
#1\right.$}

\vrule height\ht\ipbox width0.25pt depth\dp\ipbox}

{\setbox\ipbox=\hbox{$\scriptscriptstyle \left\langle\mathstrut
#1\right.$}

\vrule height\ht\ipbox width0.25pt depth\dp\ipbox}

}\right. }
\newcommand{\dirack}[1]{\left. \mathrel{\mathchoice

{\setbox\ipbox=\hbox{$\displaystyle \left.\mathstrut
#1\right\rangle$}

\vrule height\ht\ipbox width0.25pt depth\dp\ipbox}

{\setbox\ipbox=\hbox{$\textstyle \left.\mathstrut
#1\right\rangle$}

\vrule height\ht\ipbox width0.25pt depth\dp\ipbox}

{\setbox\ipbox=\hbox{$\scriptstyle \left.\mathstrut
#1\right\rangle$}

\vrule height\ht\ipbox width0.25pt depth\dp\ipbox}

{\setbox\ipbox=\hbox{$\scriptscriptstyle \left.\mathstrut
#1\right\rangle$}

\vrule height\ht\ipbox width0.25pt depth\dp\ipbox}

} #1\right\rangle}
\newcommand{\bz}{\mathbb{Z}}
\newcommand{\br}{\mathbb{R}}
\newcommand{\bn}{\mathbb{N}}
\newcommand{\beq}{\begin{equation}}
\newcommand{\eeq}{\end{equation}}
\def\blfootnote{\xdef\@thefnmark{}\@footnotetext}
\renewcommand{\mod}{\operatorname{mod}}
\def\-{^{-1}}
\begin{document}

\title[Number theory problems from the harmonic analysis of a fractal]{Number theory problems from the harmonic analysis of a fractal}
\author{Dorin Ervin Dutkay}

\address{[Dorin Ervin Dutkay] University of Central Florida\\
	Department of Mathematics\\
	4000 Central Florida Blvd.\\
	P.O. Box 161364\\
	Orlando, FL 32816-1364\\
U.S.A.\\} \email{Dorin.Dutkay@ucf.edu}

\author{John Haussermann}

\address{[John Haussermann] University of Central Florida\\
	Department of Mathematics\\
	4000 Central Florida Blvd.\\
	P.O. Box 161364\\
	Orlando, FL 32816-1364\\
U.S.A.\\} \email{jhaussermann@knights.ucf.edu}

\thanks{} 
\subjclass[2010]{11A07,11A51,42C30}    
\keywords{Cantor set, Fourier basis, prime decomposition, spectral measure}

\begin{abstract}
We study some number theory problems related to the harmonic analysis (Fourier bases) of the Cantor set introduced by Jorgensen and Pedersen in \cite{JP98}. 
 
\end{abstract}
\maketitle \tableofcontents

\section{Introduction}
In \cite{JP98}, Jorgensen and Pedersen made a surprising discovery: they constructed a fractal measure on a Cantor set which has an orthonormal Fourier series. This Cantor set is obtained from the interval $[0,1]$, dividing it into four equal intervals and keeping the first and the third, $[0,1/4]$ and $[1/2,3/4]$, and repeating the procedure infinitely many times. It can be described in terms of iterated function systems: let 
$$\tau_0(x)=x/4\mbox{ and }\tau_2(x)=(x+2)/4,\quad(x\in\br).$$
The Cantor set $X_4$ is the unique compact set that satisfies the invariance condition 
$$X_4=\tau_0(X_4)\cup \tau_2(X_4).$$
The set $X_4$ is described also in terms of the base 4 decomposition of real numbers :
$$X_4=\left\{\sum_{k=1}^n 4^{-k}b_k : b_k\in\{0,2\}, n\in\bn\right\}.$$

On the set $X_4$ one considers the Hausdorff measure $\mu$ of dimension $\log_42=\frac12$. In terms of iterated function systems, the measure $\mu$ is the invariant measure for the iterated function system, that is, the unique Borel probability measure that satisfies the invariance equation 
\begin{equation}
\mu(E)=\frac12\left(\mu(\tau_0^{-1}E)+\mu(\tau_2^{-1}E)\right),\mbox{ for all Borel sets }E\subset\br.
\label{eqi1.1}
\end{equation}
Equivalently, for all continuous compactly supported functions $f$,
\begin{equation}
\int f\,d\mu=\frac12\left(\int f\circ\tau_0\,d\mu+\int f\circ\tau_2\,d\mu\right).
\label{eqi1.2}
\end{equation}

We denote, for $\lambda\in\br$: 
$$e_\lambda(x)=e^{2\pi i\lambda\cdot x},\quad(x\in\br).$$
Jorgensen and Pedersen proved in that the Hilbert space $L^2(\mu)$ has an orthonormal basis formed with exponential functions, i.e., a Fourier basis, $E(\Gamma_0):=\{e_\lambda : \lambda\in\Gamma_0\}$ where 
\begin{equation}
\Gamma_0:=\left\{\sum_{k=0}^n4^kl_k : l_k\in\{0,1\},n\in\bn\right\}.
\label{eqi1.3}
\end{equation}

Later, Strichartz \cite{MR2279556} proved that these Fourier series have better convergence properties than their classical counterparts on the unit interval; for example, the Fourier series of a continuous function converge uniformly. 

\begin{definition}\label{def1.1}
We say that the subset $\Gamma$ of $\br$ is a {\it spectrum} for the measure $\mu$ if the corresponding family of exponential functions $E(\Gamma):=\{e_\lambda : \lambda\in\Gamma\}$ is an orthonormal basis for $L^2(\mu)$. We say that $\Gamma$ is complete/incomplete if the set $E(\Gamma)$ is as such in $L^2(\mu)$.
\end{definition}

Other spectra for the measure $\mu$ were constructed later in \cite{MR1929508,Str00,DJ06,DHS09,MR3055992}, using some other digits for the spectrum. As we can see in \eqref{eqi1.3}, the spectrum $\Gamma_0$ corresponds to the digits $\{0,1\}$. 

The main question that we address in this paper is the following:

\begin{question}
For what digits $\{0,m\}$ with $m\in\bn$ odd is the set 
$$\Gamma(m):=m\Gamma_0=\left\{\sum_{k=0}^n 4^kl_k : l_k\in\{0,m\}, n\in\bn\right\}$$
a spectrum for $L^2(\mu)$?
\end{question}

\begin{definition}\label{def1.2}
Let $m\in\bn$ be an odd number. We say that $m$ is {\it complete} if the set $\Gamma(m)$ is a spectrum for the measure $\mu$. We say that $m$ is {\it incomplete} if it is not complete. 
\end{definition}

As it was shown in \cite{DJ06}, that the set $E(\Gamma(m))$ is always orthonormal in $L^2(\mu)$, but sometimes it is incomplete. For example, for $m=3$, the set $\Gamma(3)$ is not complete. Applying the results from \cite{MR1929508} or the refinement obtained in \cite{DJ06}, we can characterize the numbers $m$ that give spectra (i.e., {\it complete} orthonormal bases) in terms of {\it extreme cycles}. 
\begin{definition}\label{defi1.2}
Let $m\in\bn$ be an odd number. 
We say that a finite set $\{x_0,x_1,\dots,x_{r-1}\}$ is an {\it extreme cycle} (for the digits $\{0,m\}$) if there exist $l_0,\dots, l_{r-1}\in\{0,m\}$ such that 
$$x_1=\frac{x_0+l_0}4,\quad x_2=\frac{x_1+l_1}4,\quad\dots\quad,x_{r-1}=\frac{x_{r-2}+l_{r-2}}4,\quad x_0=\frac{x_{r-1}+l_{r-1}}4,$$
and 
\begin{equation}
\left|\frac{1+e^{2\pi i 2x_k}}{2}\right|=1,\quad(k\in\{0,\dots,r-1\}).
\label{eqi1.2.1}
\end{equation}
The points $x_i$ are called extreme cycle points.
\end{definition}

\begin{theorem}\label{thi1.3}\cite{MR1929508,DJ06}
Let $m\in\bn$ be odd. The number $m$ is complete if and only if the only extreme cycle for the digit set $\{0,m\}$ is the trivial one $\{0\}$.
\end{theorem}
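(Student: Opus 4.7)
My plan is to characterize the completeness of $E(\Gamma(m))$ via the non-negative function
\[
c(x) := \sum_{\lambda \in \Gamma(m)} |\hat\mu(x+\lambda)|^2.
\]
Because $E(\Gamma(m))$ is already orthonormal in $L^2(\mu)$ (see \cite{DJ06}), Bessel's inequality gives $c(x)\le 1$ for every $x$, with $c(0)=1$; and general Parseval theory identifies completeness with the identity $c\equiv 1$. First I would derive a functional equation for $c$. Iterating the Fourier-side form of \eqref{eqi1.1} yields the infinite product $\hat\mu(\xi)=\prod_{k\ge 1} m_B(\xi/4^k)$ with $m_B(y)=\tfrac12(1+e^{4\pi i y})$. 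Combining this with the disjoint self-similarity $\Gamma(m)=\{0,m\}+4\Gamma(m)$ (and the $\bz$-periodicity of $m_B$ applied to $\lambda\in\Gamma(m)\subset\bz$) gives the Ruelle transfer identity
\[
c(x) = W\!\left(\tfrac{x}{4}\right)c\!\left(\tfrac{x}{4}\right) + W\!\left(\tfrac{x+m}{4}\right)c\!\left(\tfrac{x+m}{4}\right), \qquad W(y):=|m_B(y)|^2=\cos^2(2\pi y).
\]
A short check — using only that $m$ is odd — shows $W(x/4)+W((x+m)/4)\equiv 1$, so the Ruelle operator preserves constants; the condition $c\equiv 1$ is then equivalent to saying that $1$ is the only continuous fixed point with $c(0)=1$.

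For the ``non-trivial extreme cycle $\Rightarrow$ $m$ incomplete'' direction, given a cycle $\{x_0,\dots,x_{r-1}\}$ with $x_0\ne 0$ and $|m_B(x_k)|=1$, I would construct a non-zero $f \in L^2(\mu)$ orthogonal to every $e_\lambda$, $\lambda\in\Gamma(m)$. The extreme-cycle condition forces $W(x_k)=1$, so the ``other'' branch of the Ruelle identity at each cycle point has weight $0$. Iterating $\hat\mu(\xi)=m_B(\xi/4)\hat\mu(\xi/4)$ together with the cycle relation $x_k=4x_{k+1}-l_k$ produces explicit multiplicative recursions for the numbers $\hat\mu(\lambda+x_k)$, $\lambda\in\Gamma(m)$. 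I would then take $f=\sum_k \alpha_k e_{-x_k}$ and use the recursions to solve a small (circulant-type) linear system for the $\alpha_k$ that makes the inner products $\langle e_\lambda, f\rangle_{L^2(\mu)}=\sum_k \overline{\alpha_k}\,\hat\mu(-\lambda-x_k)$ vanish for all $\lambda\in\Gamma(m)$ simultaneously.

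For the converse, assume $c\not\equiv 1$ and set $M:=\sup c<1$. Using that $c$ vanishes at infinity (from decay and square-summability of $\hat\mu$), the level set $K_M:=\{c=M\}$ is non-empty and compact. The Ruelle identity together with $c\le M$ forces $K_M$ to be backward-invariant: for every $x_*\in K_M$ and every $l\in\{0,m\}$ with $W((x_*+l)/4)>0$, the preimage $(x_*+l)/4$ lies in $K_M$. A compactness/minimality argument then extracts a periodic orbit $\{y_0,\dots,y_{s-1}\}\subset K_M$. \textbf{The main obstacle} is promoting this periodic orbit to a genuine extreme cycle, i.e.\ upgrading ``$W>0$ on the orbit'' to ``$W=1$ on the orbit''. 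I would handle this by observing that if $W(y_k)\in(0,1)$ at any point, the equality case of the Ruelle identity forces the ``other'' preimage into $K_M$ as well, producing a binary tree of preimages inside $K_M$; tracing the infinite product $|\hat\mu(\xi)|^2=\prod_j W(\xi/4^j)$ along branches of this tree and comparing with the uniform bound $|\hat\mu|^2\le c\le M$ on these preimages yields a contradiction unless the tree collapses — that is, unless the orbit passes only through points with $W=1$. Since $c(0)=1>M$, the resulting extreme cycle avoids $0$, contradicting the hypothesis and establishing completeness.
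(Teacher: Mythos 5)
First, a point of reference: the paper you are working from does not actually prove Theorem \ref{thi1.3}; it imports the result from \cite{MR1929508} and \cite{DJ06}. So your proposal has to be measured against those references, and in outline you are following exactly their strategy: the Parseval function $c(x)=\sum_{\lambda\in\Gamma(m)}|\hat\mu(x+\lambda)|^2$, the transfer identity with weight $W(y)=\cos^2(2\pi y)$, and the partition-of-unity identity $W(x/4)+W((x+m)/4)=1$ for odd $m$ are all correct and constitute the standard reduction. Your forward direction (non-trivial cycle $\Rightarrow$ incomplete) also works, but it is simpler than you make it: for a non-trivial extreme cycle point $x_0$ one shows that $\hat\mu(x_0+\lambda)=0$ for \emph{every} $\lambda\in\Gamma(m)$. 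Writing $\lambda=\sum_k 4^k l_k$ and peeling off factors via $\hat\mu(\xi)=m_B(\xi/4)\,\hat\mu(\xi/4)$, at each step either the digit $l_k$ agrees with the cycle digit (and the factor is $m_B$ at an integer plus a cycle point, of modulus $1$ by Lemma \ref{lemi1.4} and periodicity of $m_B$), or it disagrees, in which case the factor is $m_B(\pm m/4+\hbox{integer})=(1+(-1)^m)/2=0$ since $m$ is odd. A disagreement must occur, because $\lambda$ has finitely many nonzero digits while a non-trivial cycle uses the digit $m$ infinitely often. Hence the single exponential $e_{-x_0}$ is already orthogonal to all of $E(\Gamma(m))$; your ``circulant system'' is the identically zero system, and proving this vanishing is the actual content of this direction, not solving a linear system.

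The converse direction, however, contains genuine gaps, two of which are outright false statements on which your construction rests. First, you set $M:=\sup c<1$; this is impossible: since $0\in\Gamma(m)$ and $\hat\mu(0)=1$ we have $c(0)\geq 1$, while Bessel gives $c\leq 1$, so $c(0)=1$ and $\sup c=1$ always. The object to study is $\sup(1-c)$, equivalently $\inf c$. Second, you claim $c$ vanishes at infinity; it does not: for every $\lambda_0\in\Gamma(m)$ the term $\lambda=\lambda_0$ gives $c(-\lambda_0)\geq|\hat\mu(0)|^2=1$, so $c\equiv 1$ on the unbounded set $-\Gamma(m)$. Consequently the compact level set $K_M$ does not exist, and the backward-invariance argument has nothing to act on. The correct localization --- and this is how \cite{MR1929508,DJ06} proceed --- comes from iterating the transfer identity: the inverse branches $x\mapsto x/4$, $x\mapsto(x+m)/4$ are contractions with attractor $X_L\subset[0,m/3]$, and since the weights along words form a partition of unity, $1-c(x)$ is for each $n$ a convex combination of values of $1-c$ at points within distance $O(4^{-n})$ of $X_L$; hence $\sup_{\br}(1-c)=\max_{X_L}(1-c)$, which is attained on a compact set inside the attractor. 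Finally, even after this repair, the step you yourself flag as the main obstacle --- upgrading a positive-weight periodic orbit to an extreme cycle, i.e.\ to $W\equiv1$ on the orbit --- is where the real work lies, and your sketch both relies on the false bound $c\leq M<1$ and underestimates the difficulty: positive-weight periodic orbits of the branches need not be extreme (for $m=3$, the orbit $\{1/5,\,4/5\}$ has $W>0$ at both points but is not an extreme cycle, consistent with Lemma \ref{lemi1.4}, since its points are not integers). ``The tree collapses'' therefore needs a genuine argument; in \cite{DJ06} this is a martingale/random-walk analysis of the transfer operator, and in \cite{MR1929508} it exploits analyticity of $c$. As written, the converse half of your proof does not go through.
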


For example, for $m=3$, the set $\{1\}$ is an extreme cycle: $(1+3)/4=1$ and $e^{2\pi i2\cdot 1}=1$, so $\Gamma(3)$ is incomplete. 

In \cite{DJ10} it was proved that the sets $\Gamma(5^k)$ are complete for any $k$, which shows the surprising fact that spectra have arbitrarily low densities. In \cite{MR3055992} it was shown that there are spectra for this fractal measure which have zero Beurling dimension. The result from \cite{DJ10} was used by Jorgensen et al. to construct some scaling operators on the Cantor set, operators that exhibit an interesting fractal structure \cite{MR2966145,MR3202868}. 

Theorem \ref{thi1.3} turns our question into a number theory question: for what odd numbers $m$ are there no (non-trivial) extreme-cycles? Any odd number $m$ satisfying this criterion is complete; any odd number $m$ not satisfying this criterion is incomplete.

We show in Propostion \ref{pri1.5} that, if a number is incomplete, then all its multiples are incomplete. Because of this, we introduce a new notion of primitive numbers:

\begin{definition}\label{defprim}
We say that an odd number $m$ is {\it primitive} if $m$ is incomplete and, for all proper divisors $d$ of $m$, $d$ is complete. In other words, there exist non-trivial extreme cycles for the digits $\{0,m\}$ and there are no non-trivial extreme cycles for the digits $\{0,d\}$ for any proper divisor $d$ of $m$.
\end{definition} 

Of course, a number $m$ will be incomplete if and only if it is divisible by a primitive number. A computer check shows that the first primitive numbers are: 3, 85, 341,
 455,
 1285,
 4369,
 5461,
 6355,
 9709,
 28679,
 60787,
 327685,
 416179. See Table \ref{tab1} for more primitive numbers. So, in particular, the numbers $3k,85k,341k,455k,1285k$ etc. are incomplete for any odd natural number $k$.  The primitive numbers seem to become more and more sparse, but we prove in Theorem \ref{th1.10} that there are infinitely many primitive numbers.

\begin{table}[ht]\label{tab1}
{	
\begin{center}

\begin{tabular}{|c|c|c|}
\hline
$m$ & Prime decomposition & $o_4$ for the primes\\
\hline
3 & 3 & 1\\
\hline
85 & 5,17 & 2,4 \\
\hline
341& 11,31&5,5\\
\hline
455&	5,7,13&	2,3,6\\
\hline
1285&	5,257&	2,8\\
\hline
4369&	17,257&	4,8\\
\hline
5461&	43,127&	7,7\\
\hline
6355&	5,31,41&	2,5,10\\
\hline
9709&	7,19,73&	3,9,9\\
\hline
28679&	7,17,241&	3,4,12\\
\hline
60787&	89,683&	11,11\\
\hline
327685&	5,65537&	2,16\\
\hline
416179&	29,113,127&	14,14,7\\
\hline
549791&	11,151,331&	5,15,15\\
\hline
755915&	5,19,73,109&	2,9,9,18\\
\hline
1114129&	17,65537&	4,16\\
\hline
1472045&	5,37,73,109&	2,18,9,18\\
\hline
1549411&	31,151,331&	5,15,15\\
\hline
1912111&	31,61681&	5,20\\
\hline
2060863&	7,37,73,109&	3,18,9,18\\
\hline
3335735&	5,13,19,37,73&	2,6,9,18,9\\
\hline
6973057& 7,13,19,37,109&	3,6,9,18,18\\
\hline
\end{tabular}

\end{center}
}
\caption{Primitive numbers up to $5\times 10^6$, their prime decompositions and $o_4$ for the primes in the prime decomposition.}
\end{table}

In Theorem \ref{Gprop}, we give a criterion that ensures that a number $m$ is complete. It is based on the multiplicative group generated by the number 4 in $\bz_m$:
\begin{definition}\label{defgm}
Let $m$ be an odd natural number. We will denote by $\bz_m$ the finite ring of integers modulo $m$, $\bz/m\bz$. We use the notation $\bz_m^\times$ to indicate the multiplicative structure on $\bz_m$. We denote by $U(\bz_m)$ the set of elements in $\bz_m$ that have a multiplicative inverse. We denote by $G_m$ the group generated by 4 in $U(\bz_m)$,
$$G_m=\{4^j(\mod m) :j=0,1,\dots\}.$$
The order of $4$ in the group $U(\bz_m)$ is the smallest positive integer $a$ such that $4^a\equiv 1\mod m$. We denote $a$ by $o_4(m)$ and $o_4(m)=|G_m|$.

We denote by $\lcm(a_1,\dots,a_n)$ the lowest common multiple of the numbers $a_1,\dots,a_n$.
\end{definition}

Then, using this criterion, we prove in Theorem \ref{th3.3} that any prime power is a complete number.

The rest of the paper is devoted to the study necessary or sufficient conditions for a composite numbers to be primitive or complete. Section 3 contains several results 
in this direction; various conditions are given for a number to be complete or primitive based on the prime decomposition of the number and on the order of the number 4 in each of the multiplicative groups corresponding to these primes. Theorems \ref{th1.15} and \ref{cor1.14} give a sufficient condition for a number to be complete. Theorem \ref{cor1.14} also gives a condition for a number to be non-primitive. The key technical lemmas are Lemma \ref{lem1.16}, \ref{lem1.13} and \ref{lem2.14}.

In the last section of our paper, we illustrate the theory with some examples and we formulate some conjectures. 

\section{Prime powers}
We begin with some lemmas about the basic properties of extreme cycles. 
\begin{lemma}\label{lemi1.4}
If $x_0$ is an extreme cycle point then $x_0\in\bz$, $x_0$ has a periodic base 4 expansion 
\begin{equation}
x_0=\frac{a_0}4+\frac{a_1}{4^2}+\dots+\frac{a_{r-1}}{4^r}+\frac{a_0}{4^{r+1}}+\dots+\frac{a_{r-1}}{4^{2r}}+\dots,
\label{eqi1.4.1}
\end{equation}
with $a_k\in\{0,m\}$, 
and $0\leq x_0\leq\frac m3$. Hence
$$x_0=\frac{4^{r-1}a_0+4^{r-2}a_1+\dots+4a_{r-2}+a_{r-1}}{4^r-1}.$$

Moreover 
$$\{x_0 : x_0\mbox{ is an extreme cycle point }\}=X_L\cap\bz,$$
where $X_L$ is the attractor of the iterated function system 
$$\sigma_0(x)=\frac x4,\quad\sigma_m(x)=\frac{x+m}{4},$$
so 
$$X_L=\cup_{l\in\{0,m\}}\sigma_l(X_L),$$
\begin{equation}
X_L=\left\{\sum_{n=1}^\infty\frac{l_n}{4^n} : l_n\in\{0,m\} \mbox{ for all }n\in\bn\right\}.
\label{eqi.4.1}
\end{equation}

\end{lemma}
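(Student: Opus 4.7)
My plan is to solve the cycle recursion explicitly, read off the base-$4$ expansion and the bound $0\le x_0\le m/3$, obtain integrality from the extreme-cycle modulus condition, and then identify the set of extreme cycle points with $X_L\cap\bz$ via a shift argument on a finite set.

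Starting from $4x_{k+1}=x_k+l_k$ (indices modulo $r$), iterating the $r$ equations and using $x_r=x_0$ gives
$$x_0=\frac{x_0}{4^r}+\frac{l_0+4l_1+\cdots+4^{r-1}l_{r-1}}{4^r},$$
hence $x_0=(l_0+4l_1+\cdots+4^{r-1}l_{r-1})/(4^r-1)$. Relabeling $a_k:=l_{r-1-k}$ recovers the closed form stated in \eqref{eqi1.4.1}. Re-expanding this fraction as a geometric series with ratio $1/4^r$ reassembles the digits into the claimed purely periodic base-$4$ expansion, and the bound $0\le x_0\le m(1/4+1/16+\cdots)=m/3$ is immediate since each $a_j\in\{0,m\}$.

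For integrality, the extreme cycle condition $|(1+e^{2\pi i\cdot 2x_k})/2|=1$ forces $e^{2\pi i\cdot 2x_k}=1$, i.e.\ $2x_k\in\bz$. By the cyclic symmetry of the recursion, every $x_k$ has the form $N_k/(4^r-1)$ with $N_k\in\bz$ (just start the cycle at $x_k$ instead of at $x_0$); since $4^r-1$ is odd, $2x_k\in\bz$ upgrades to $(4^r-1)\mid N_k$, and therefore $x_k\in\bz$.

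The forward inclusion in $\{x_0:x_0\text{ is an extreme cycle point}\}\subseteq X_L\cap\bz$ is now automatic from the periodic base-$4$ expansion. For the reverse inclusion, the self-similarity $X_L=\sigma_0(X_L)\cup\sigma_m(X_L)$ yields the bracket $X_L\subseteq[0,m/12]\cup[m/4,m/3]$, and the two pieces are disjoint, so there is a well-defined shift $T:X_L\to X_L$ with $T(y)=4y$ on $[0,m/12]$ and $T(y)=4y-m$ on $[m/4,m/3]$, and this shift preserves $X_L\cap\bz$. On integers $T$ is injective: the only cross-case collision would be $4y_1=4y_2-m$ with $y_1\le m/12$, $y_2\ge m/4$, which forces $y_2-y_1=m/4$, impossible because $m$ is odd. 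Hence $T$ permutes the finite set $X_L\cap\bz$, every orbit is purely periodic, and the resulting cycle is extreme because $2y\in\bz$ whenever $y\in\bz$. The one subtle point to handle carefully is exactly this last paragraph: getting pure (not merely eventual) periodicity, which is precisely what the injectivity of $T$ on integer points secures.
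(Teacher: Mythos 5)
Your proof is correct, and while it follows the same overall skeleton as the paper's (iterate the cycle recursion to get the closed form, the periodic expansion and the bound $0\le x_0\le m/3$; then identify cycle points with $X_L\cap\mathbb{Z}$ by following inverse branches of the IFS), two of your key sub-steps are argued by genuinely different mechanisms. For integrality, the paper deduces $2x_0\in\mathbb{Z}$ from the triangle-inequality equality case and then rules out proper half-integers by propagating to $x_1=(x_0+l_0)/4$, whose odd numerator over $8$ yields a contradiction; you instead note that $x_k=N_k/(4^r-1)$ has odd denominator, so $2x_k\in\mathbb{Z}$ upgrades at once to $x_k\in\mathbb{Z}$ --- shorter and cleaner. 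For the inclusion $X_L\cap\mathbb{Z}\subseteq\{\mbox{extreme cycle points}\}$, the paper constructs the same backward orbit as your map $T$ but obtains pure periodicity arithmetically: the preimages satisfy $x_{-i}\equiv 4^i x_0 \pmod m$, so after $a=o_4(m)$ steps $x_{-a}\equiv x_0\pmod m$, and two congruent integers in $[0,m/3]$ must coincide. You avoid modular arithmetic entirely: $T$ is an injective self-map of the finite set $X_L\cap\mathbb{Z}$ (injectivity because $m$ odd kills the cross-branch collision $y_2-y_1=m/4$), hence a permutation, hence every orbit is purely periodic --- and you correctly flag that pure, not merely eventual, periodicity is the crux. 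Your route is more elementary and self-contained; the paper's route buys slightly more, since it shows the cycle length divides $o_4(m)$ and introduces the congruence picture (the group $G_m$ acting on cycle points) that is exploited later, e.g.\ in Proposition \ref{pr3.8}.
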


\begin{proof}
Let $l_0,\dots,l_{r-1}$ as in Definition \ref{defi1.2}. Then
$$x_0=\frac{x_{r-1}}4+\frac{l_{r-1}}{4}=\frac{x_{r-2}}{4^2}+\frac{l_{r-2}}{4^2}+\frac{l_{r-1}}{4}=\dots=\frac{x_0}{4^r}+\frac{l_0}{4^r}+\frac{l_1}{4^{r-1}}+\dots+\frac{l_{r-1}}{4}.$$ 
Iterating this equality to infinity we obtain the base 4 decomposition of $x_0$. Also 
$$0\leq x_0\leq \sum_{k=1}^\infty \frac{m}{4^k}=\frac{m}{3}.$$
From \eqref{eqi1.2.1}, using the triangle inequality we see that we must have $e^{2\pi i2x_0}=1$ so $x_0\in\bz/2$. If $x_0=(2m+1)/2$ with $m\in\bz$ then 
$x_1=(x_0+l_0)/4=\frac{2m+1+2l_0}{8}$, but since $2m+1+2l_0$ is odd it follows that $x_1\not\in\bz/2$. This contradicts the fact that $x_1$ is also an extreme cycle point so it satisfies \eqref{eqi1.4.1}. Thus $x_0\in\bz$.  

These statements show that $x_0$ is contained in $X_L\cap\bz$. Conversely, if $x_0\in X_L\cap \bz$ then, if $x_0\in \sigma_0(X_L)$, we have that there exists $x_{-1}\in X_L$ such that $x_0=\frac{x_{-1}}{4}$, and we get that $x_{-1}=4x_0\in \bz\cap X_L$. If $x_0\in\sigma_m(X_L)$ then there exists $x_{-1}\in X_L$ such that $x_0=\frac{x_{-1}+m}4$. Then $x_{-1}=4x_0-m\equiv x_0(\mod m)$. By induction, we obtain $x_{-1},x_{-2},\dots$ and digits $d_0,d_1,\dots...$ in $\{0,m\}$ such that $x_{-i}=\frac{x_{-i-1}+d_{i}}4$. Moreover, $x_0\equiv 4^ix_{-i}(\mod m)$. Since $4$ is mutually prime with $m$, it has a finite order $a$ in the multiplicative group of invertible elements in $U(\bz_m)$, so $4^a\equiv 1(\mod m)$. Then $x_0\equiv x_{-a}(\mod m)$. But since $x_0$ and $x_{-a}$ are contained in $X_L\subset [0,\frac m3]$, we get that $x_0=x_{-a}$ and thus $x_0$ is a point in an extreme cycle in $X_L\cap\bz$. 

\end{proof}

\begin{remark}\label{rem1.4}
Using Lemma \ref{lemi1.4}, one can develop an algorithm to determine the existence of non-trivial cycles. Take all the integers $k$ between $1$ and $m/3$. Define $x=k$. If 
$x\equiv 0\mod 4$ then set $x=x/4$. If $x+m\equiv0\mod4$ then set $x=(x+m)/4$. If none of these two conditions are satisfied then move to $k+1$. Do this as long as it is possible or until the point $x$  has already been checked before. If such a point is reached then stop; there is a non-trivial extreme cycle. If not, move on to the next integer $k+1$ and repeat these steps.

\end{remark}

\begin{theorem}\label{th1.10}
There are infinitely many primitive numbers.
\end{theorem}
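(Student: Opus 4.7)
The plan is to produce, for each sufficiently large prime $p$, a distinct primitive number; specifically, I will show that $M_p := (4^p-1)/3$ is incomplete for every odd prime $p\ge 5$, and that its ``$o_4$-signature'' pins down a primitive divisor uniquely associated to $p$.

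First I would exhibit an explicit non-trivial extreme cycle for the digits $\{0,M_p\}$, consisting of the $p$ integers
$$x_0 = 7,\quad x_1 = \tfrac{4^p+20}{12},\quad x_2 = \tfrac{5\cdot 4^p+16}{48},\quad x_3 = 7\cdot 4^{p-3},\quad x_4 = 7\cdot 4^{p-4},\quad\dots,\quad x_{p-1}=28,$$
produced by applying $\sigma_{M_p}$ (add $M_p$, divide by $4$) three times, then $\sigma_0$ (divide by $4$) for $p-3$ steps. Integrality of $x_1,x_2$ follows from $4^p\equiv 4\pmod{12}$ and $4^p\equiv 16\pmod{48}$ for $p\ge 2$; the bound $x_k\le M_p/3=(4^p-1)/9$ reduces to $4^p\ge 64$ in each case; the closing identity $\bigl(\tfrac{5\cdot 4^p+16}{48}+M_p\bigr)/4 = 7\cdot 4^{p-3}$ is a direct computation. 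Each $x_k$ is an integer, so $|(1+e^{4\pi i x_k})/2|=1$, confirming the extreme-cycle condition of Definition \ref{defi1.2}. Hence $M_p$ is incomplete, so by Definition \ref{defprim} it admits a divisor $m_p$ that is minimal with respect to being incomplete, and any such $m_p$ is primitive.

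Next I would argue that the primes $p,p'$ can be recovered from $m_p,m_{p'}$, which forces the primitive numbers to be distinct. For $p\ge 5$ prime, $3\nmid p$, so $9\nmid 4^p-1$, hence $3\nmid M_p$ and therefore $3\nmid m_p$. For any prime $q\mid m_p\mid M_p\mid 4^p-1$, we have $o_4(q)\mid p$; but $q\ne 3$ gives $o_4(q)\ne 1$, and primality of $p$ forces $o_4(q)=p$. So \emph{every} prime divisor of $m_p$ has $4$-order equal to $p$. If $m_p = m_{p'}$ for two odd primes $p\ne p'\ge 5$, picking any prime $q$ dividing the common value yields $p = o_4(q) = p'$, a contradiction. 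Thus $\{m_p : p\ge 5 \text{ prime}\}$ is an infinite family of distinct primitive numbers.

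The main obstacle is simply keeping the arithmetic of the explicit cycle straight — the formula collapses at $p=3$ (where $x_1=x_0=7$ and $3\mid M_3$), so one must restrict to $p\ge 5$; the rest of the argument is clean once one notes that $3\nmid M_p$ exactly captures the fact that the construction ``avoids'' the already-known primitive $3$ and produces genuinely new primitives indexed by $p$.
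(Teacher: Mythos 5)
Your proof is correct, and it shares the paper's key arithmetic ingredient while organizing the logic quite differently. Both arguments rest on the same identity: for $m=(4^r-1)/3$, the point $7=21m/(4^r-1)$ is a non-trivial extreme cycle point; your explicit cycle $7\to(4^p+20)/12\to(5\cdot 4^p+16)/48\to 7\cdot 4^{p-3}\to\dots\to 28\to 7$ is exactly the cycle the paper extracts from Lemma \ref{lemi1.4} by taking the last three base-4 digits equal to $m$ and the rest $0$. The difference is what is done with this family of incomplete numbers. The paper argues by contradiction: assuming $3,m_1,\dots,m_s$ were all the primitive numbers, it chooses the exponent adaptively ($n$ a common multiple of $o_4(9),o_4(m_1),\dots,o_4(m_s)$), so that $4^{n+1}-1\equiv 3$ modulo $9$ and modulo each $m_i$; then $(4^{n+1}-1)/3$ is divisible by no primitive number, hence complete, contradicting the cycle at $7$. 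You instead fix prime exponents $p\ge 5$ and prove that the resulting primitive divisors $m_p\mid M_p$ are pairwise distinct, via the observation that any prime $q\mid m_p$ divides $4^p-1$ and satisfies $q\ne 3$ (since $3\nmid M_p$ when $3\nmid p$), so $o_4(q)\mid p$ and $o_4(q)\ne 1$ force $o_4(q)=p$. This order-theoretic injectivity step has no counterpart in the paper's proof, and it buys something extra: a direct, constructive infinite family, together with the structural statement that for every prime $p\ge 5$ there is a primitive number all of whose prime factors $q$ satisfy $o_4(q)=p$ --- consistent with Table \ref{tab1} (e.g., $341=M_5=11\cdot 31$ with orders $5,5$; $5461=M_7=43\cdot 127$ with orders $7,7$) and with Conjecture \ref{con1}(ii). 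The paper's version is shorter and needs no analysis of prime factors, but it locates no specific primitive numbers. Your computational details all check out: integrality of $x_1,x_2$ from $4^p\equiv 4 \pmod{12}$ and $4^p\equiv 16\pmod{48}$, the closing identity, and the fact that integrality of the $x_k$ alone gives \eqref{eqi1.2.1}, so the bounds $x_k\le M_p/3$ (while true) are not even needed for Definition \ref{defi1.2}; the restriction to $p\ge 5$ correctly excludes the degenerate case $p=3$, where $3\mid M_3$ and the distinctness argument would fail.
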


\begin{proof}
Suppose there are only finitely many primitive numbers and let $m_1,\dots,m_s$ be all the primitive numbers strictly bigger than 3. Let $n$ be a common multiple for the numbers $o_4(9)$, $o_4(m_1)$, $\dots$, $o_4(m_s)$. Then 
$$4^{n+1}-1\equiv4-1= 3(\mod 9,\mod m_1,\dots,\mod m_s).$$
Let $m=\frac{4^{n+1}-1}{3}$. We have that $m$ is not divisible by 3, $m_1,\dots, m_s$, otherwise $4^{n+1}-1$ is divisible by $9,m_1,\dots,m_s$. So $m$ it is not divisible by any primitive number, therefore it must be complete. 

On the other hand, in Lemma \ref{lemi1.4}, let $r=n$, $a_{n-1}=a_{n-2}=a_{n-3}=m$, $a_0=\dots=a_{n-4}=0$. We have 
$$x_0=\frac{m(16+4+1)}{4^{n+1}-1}=\frac{\frac{4^{n+1}-1}{3}\cdot 21}{4^{n+1}-1}=7\in X_L\cap \bz.$$
Thus $x_0$ is a non-trivial extreme cycle point, so $m$ cannot be complete.
\end{proof}

\begin{lemma}\label{lem2.3}
Assume $m>3$ is odd and $x_j$ is an extreme cycle point for the digit set $\{0,m \}$. Then $x_j \equiv 0 (\mod 4)$ or $x_j \equiv -m (\mod 4)$.
\end{lemma}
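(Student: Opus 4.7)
My plan is to apply Lemma \ref{lemi1.4}, which establishes that every extreme cycle point is a nonnegative integer, and then to exploit the cycle relation directly. Since the argument essentially follows from the definition once integrality is known, I expect the proof to be very short---effectively a one-line observation.

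Specifically, I would take the cycle $\{x_0, x_1, \dots, x_{r-1}\}$ containing $x_j$, and focus on the step that produces the successor $x_{j+1}$ (with indices taken modulo $r$): by Definition \ref{defi1.2}, there is some $l_j \in \{0, m\}$ with
$$x_{j+1} = \frac{x_j + l_j}{4}.$$
Lemma \ref{lemi1.4} guarantees that $x_{j+1}$ is an integer, so $4$ must divide $x_j + l_j$. The two cases $l_j = 0$ and $l_j = m$ then immediately give $x_j \equiv 0 \pmod 4$ or $x_j \equiv -m \pmod 4$, respectively. Either case matches the stated conclusion, so I am done.

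There is no real obstacle: once Lemma \ref{lemi1.4} supplies integrality, this is just unpacking the recurrence in Definition \ref{defi1.2}. The hypothesis $m > 3$ does not appear to play a role in the argument itself---the conclusion even holds for $m = 3$, since the nontrivial cycle point $x = 1$ satisfies $1 \equiv -3 \pmod 4$. I suspect the restriction $m > 3$ is included because downstream lemmas will want to exclude the $m = 3$ case separately, rather than because the mod-$4$ dichotomy actually fails there.
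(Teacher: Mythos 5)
Your proof is correct and is essentially identical to the paper's: both take the cycle relation $x_{j+1} = (x_j + l_j)/4$, use integrality of the cycle points (Lemma \ref{lemi1.4}) to conclude $4x_{j+1} \equiv 0 \pmod 4$, and read off the two cases $l_j = 0$ and $l_j = m$. Your side remark about the hypothesis $m>3$ being inessential here is also accurate.
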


\begin{proof}
We have
\beq
x_{j+1} = \frac{x_j + l_j}{4},
\eeq
where $l_j \in \{0,m \}$. Then
\beq
4 x_{j+1} = {x_j + l_j}.
\eeq
Considering the above modulo $4$, we have
\beq
0 \equiv x_j +m (\mod 4)
\eeq
or
\beq
0 \equiv x_j (\mod 4).
\eeq
\end{proof}

\begin{lemma}\label{lem2.4}
Let $m>3$ be an odd number not divisible by $3$ and $x_t$ be the largest extreme cycle point in the non-trivial extreme cycle $X$ for the digit set $\{0,m \}$. Then $x_t$ is divisible by $4$.
\end{lemma}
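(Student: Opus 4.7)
The plan is to argue by contradiction using the dichotomy from Lemma \ref{lem2.3} together with the explicit periodic base-$4$ expansion of cycle points given by Lemma \ref{lemi1.4}. Lemma \ref{lem2.3} says $x_t \equiv 0 \pmod 4$ or $x_t \equiv -m \pmod 4$, so it suffices to exclude the second alternative. Assume for contradiction $x_t \equiv -m \pmod 4$; then the digit used at step $t$ of the cycle must be $l_t = m$, so $x_{t+1} = (x_t + m)/4$. Since $x_{t+1}$ is also an extreme cycle point, maximality of $x_t$ gives $x_{t+1} \leq x_t$. Separately, maximality combined with non-triviality $x_t > 0$ forces $l_{t-1} = m$ as well: otherwise $x_t = x_{t-1}/4 < x_{t-1}$, contradicting the choice of $x_t$.

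By Lemma \ref{lemi1.4}, $x_t = \sum_{n \geq 1} f_n / 4^n$ has a periodic digit sequence of period $r$ whose first period is the cyclic tuple $(l_{t-1}, l_{t-2}, \ldots, l_0, l_{r-1}, \ldots, l_t)$. The same formula for $x_{t+1}$ produces exactly a one-step shift of this tuple: the first digit of $x_{t+1}$ is $l_t = m$, and for $n \geq 2$ the $n$-th digit of $x_{t+1}$ equals the $(n-1)$-th digit of $x_t$. Since all digits lie in $\{0, m\}$, a standard tail estimate (any discrepancy of size $m$ at position $k$ contributes $m/4^k$, which strictly dominates the subsequent tail $\sum_{j > k} m/4^j = m/(3 \cdot 4^k)$) shows that lex order on the digit sequences matches the numerical order. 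Comparing positions one at a time, $x_t \geq x_{t+1}$ forces each digit of $x_t$ to dominate the corresponding digit of $x_{t+1}$ at the first discrepancy. The sequence for $x_{t+1}$ reads $(m, m, l_{t-2}, l_{t-3}, \ldots)$ while that for $x_t$ reads $(m, l_{t-2}, l_{t-3}, \ldots)$, so position $2$ forces $l_{t-2} = m$; then position $3$ forces $l_{t-3} = m$; an easy induction propagates this to $l_{t-k} = m$ for every $k$.

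Hence every $l_i$ equals $m$, giving $x_t = m \sum_{n \geq 1} 4^{-n} = m/3$. This contradicts $x_t \in \mathbb{Z}$, since by hypothesis $3 \nmid m$. Therefore $x_t \equiv 0 \pmod 4$, as required. The main delicacy is setting up correctly the correspondence between a cyclic rotation of $(l_0, \ldots, l_{r-1})$ and the base-$4$ expansions of $x_t$ and $x_{t+1}$, since the indexing in Lemma \ref{lemi1.4} reverses the direction of the cycle; once the shift relation is laid out, the lex-order comparison is mechanical.
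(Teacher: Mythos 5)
Your proof is correct, but it reaches the contradiction by a much longer route than the paper's, which is essentially a two-line inequality. Both proofs use Lemma \ref{lem2.3} to reduce to excluding the alternative $x_t\equiv -m \pmod 4$, in which case the successor of $x_t$ in the cycle is $x_{t+1}=(x_t+m)/4$. The paper then simply observes that $x_t<m/3$ (the bound $x_t\le m/3$ of Lemma \ref{lemi1.4} is strict because $x_t\in\mathbb{Z}$ and $3\nmid m$), so $(x_t+m)/4>(x_t+3x_t)/4=x_t$, contradicting maximality immediately. You instead unroll the inequality $x_t\ge x_{t+1}$ into a lexicographic comparison of periodic base-4 expansions: your indexing (the digits of $x_t$ are $l_{t-1},l_{t-2},\dots$ read cyclically, and the expansion of $x_{t+1}$ is that of $x_t$ with $l_t=m$ prepended), the tail estimate justifying that lex order matches numerical order, and the cyclic induction forcing every $l_i=m$ are all sound, and the final contradiction $x_t=m/3\notin\mathbb{Z}$ is valid. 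Note, though, that your digit induction is exactly the unrolled form of the single algebraic equivalence $x_t\ge(x_t+m)/4\iff 3x_t\ge m\iff x_t\ge m/3$: combined with $x_t\le m/3$ it pins $x_t=m/3$ and you contradict integrality, whereas the paper uses the strict form $x_t<m/3$ to contradict maximality in one step. (Also, your separate argument that $l_{t-1}=m$ is redundant, since position 1 of your own lex comparison forces it.) So the two proofs rest on the same ingredients --- Lemma \ref{lem2.3}, Lemma \ref{lemi1.4}, maximality, and $3\nmid m$ --- and your extra machinery buys nothing beyond exhibiting the extremal configuration ``all digits equal $m$'' explicitly; the algebraic shortcut is worth knowing.
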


\begin{proof}
Assume for contradiction's sake that $x_t$ is odd. Then, with Lemma \ref{lem2.3}, the next cycle point is 
$$ \frac{x_t + m}{4} .$$
Since $x_t<m/3$ we get that 
$$ \frac{x_t + m}{4} > x_t . $$
This is a contradiction to the maximality of $x_t$.

Since $x_t$ is not odd, it is divisible by $4$ by the previous lemma.

\end{proof}

We mention also a way to determine if a coset of the group $G_m$ is an extreme cycle
\begin{proposition}\label{pr2.5}
Assume $m>3$ is odd. If a co-set $C$ of $G_m$ in $U(\bz_m)$ has the property that for all $x_j \in C$, $x_j < \frac{m}{2}$, then $C$ is an extreme cycle for the digit set $\{0,m \}$.
\end{proposition}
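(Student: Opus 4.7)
The plan is to show that, under the hypothesis, the coset $C$, arranged in a natural cyclic order, is exactly the set of points of an extreme cycle in the sense of Definition \ref{defi1.2}. Let $r=o_4(m)=|G_m|=|C|$ and pick any $x_0\in C$, identified with its integer representative in $\{1,\dots,m-1\}$. Define $x_{j+1}$ inductively to be the representative in $\{1,\dots,m-1\}$ of $4^{-1}x_j\pmod m$. Since $4^{-1}$ generates the same subgroup as $4$, the sequence $x_0,x_1,\dots,x_{r-1}$ is a cyclic enumeration of the whole coset $C$, and closes up with $x_r=x_0$.

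The construction forces $4x_{j+1}\equiv x_j\pmod m$, so $l_j:=4x_{j+1}-x_j$ is an integer multiple of $m$. The hypothesis $x_{j+1}<m/2$ gives $l_j<2m$, while $x_j<m$ together with $x_{j+1}\geq 1$ (since $x_{j+1}\in U(\bz_m)$) gives $l_j>4-m>-m$. The only multiples of $m$ in the open interval $(-m,2m)$ are $0$ and $m$, so $l_j\in\{0,m\}$ and $x_{j+1}=(x_j+l_j)/4$, which is precisely the recursion in Definition \ref{defi1.2}.

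Finally, the extremality condition $\bigl|\tfrac{1+e^{2\pi i 2x_j}}{2}\bigr|=1$ reduces to $e^{4\pi i x_j}=1$, which is automatic because each $x_j$ is an integer. Thus $C=\{x_0,\dots,x_{r-1}\}$, equipped with the digits $l_0,\dots,l_{r-1}\in\{0,m\}$, is an extreme cycle.

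There is no serious obstacle here; the argument is essentially a single observation once the cyclic labeling of $C$ is in place. The one point that requires a bit of care is the bound on $l_j$: the hypothesis $x_{j+1}<m/2$ is exactly what rules out the value $l_j=2m$, and the mild bound $x_j<m$ (coming from $x_j$ being a reduced residue) rules out the value $l_j=-m$, so that $\{0,m\}$ are the only possibilities.
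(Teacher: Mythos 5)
Your proof is correct and follows essentially the same route as the paper: order the coset so that $x_j\equiv 4x_{j+1}\ (\mathrm{mod}\ m)$, use the hypothesis $x_{j+1}<m/2$ (together with $0<x_j<m$) to pin the multiple of $m$ down to $l_j\in\{0,m\}$, and get the extremality condition \eqref{eqi1.2.1} for free from integrality. The only cosmetic differences are that you make the cyclic labeling explicit via multiplication by $4^{-1}$ and verify \eqref{eqi1.2.1} directly, where the paper cites Lemma \ref{lemi1.4}.
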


\begin{proof}
Let $C$ be such a co-set. Label the elements in $C$ such that $ x_j \equiv 4 x_{j+1} (\mod m)$, and if $a$ is the number of elements in $G_m$, $x_{a-1} \equiv 4 x_0  (\mod m)$. Then, since $0<x_{j+1} < \frac{m}{2}$, we have $0<4 x_{j+1} < 2m$, so
\beq
x_j = 4 x_{j+1} - km ,
\eeq
where $k \in \{0,1 \}$, and similarly for $x_0$ and $x_{a-1}$. Rearranging, we find that 
\beq
\frac{x_j + l_j}{4} =  x_{j+1}  ,
\eeq
where $l_j \in \{0,m\}$, and similarly for $x_0$ and $x_{a-1}$. Since $C$ contains only integers, by Lemma \ref{lemi1.4}, $C$ is an extreme cycle.
\end{proof}

\begin{proposition}\label{pri1.5}
Let $m$ and $k$ be some odd natural numbers. If $m$ is incomplete then $km$ is incomplete. 
\end{proposition}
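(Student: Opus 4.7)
The plan is to take a non-trivial extreme cycle for the digit set $\{0,m\}$, witnessed by the assumption that $m$ is incomplete, and produce a non-trivial extreme cycle for the digit set $\{0,km\}$ by simple dilation. The conclusion will then follow from Theorem \ref{thi1.3}.

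Concretely, I would assume that $\{x_0,x_1,\dots,x_{r-1}\}$ is a non-trivial extreme cycle for $\{0,m\}$, so there exist $l_0,\dots,l_{r-1}\in\{0,m\}$ with
$$x_{j+1}=\frac{x_j+l_j}{4}\quad(\text{indices }\mod r),$$
and $e^{2\pi i 2x_j}=1$ for all $j$ (equivalent to \eqref{eqi1.2.1}). Define $y_j:=kx_j$ and $l'_j:=kl_j$. Since $k$ is an odd integer and $l_j\in\{0,m\}$, we have $l'_j\in\{0,km\}$, and dividing the cycle relation by nothing / multiplying by $k$ gives
$$y_{j+1}=kx_{j+1}=k\cdot\frac{x_j+l_j}{4}=\frac{y_j+l'_j}{4}.$$
By Lemma \ref{lemi1.4}, each $x_j$ is an integer, hence each $y_j=kx_j$ is an integer, and in particular $e^{2\pi i 2y_j}=1$, so condition \eqref{eqi1.2.1} holds for the new points. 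Thus $\{y_0,\dots,y_{r-1}\}$ satisfies all the requirements of Definition \ref{defi1.2} for the digit set $\{0,km\}$.

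It remains to check non-triviality and that we indeed have a set of points (not a collapsed orbit). Since the original cycle is non-trivial, some $x_j\neq 0$; because $k\geq 1$, the corresponding $y_j=kx_j\neq 0$, so the new cycle is not $\{0\}$. Moreover, multiplication by the nonzero integer $k$ is injective, so if the $x_j$'s are distinct, so are the $y_j$'s. Applying Theorem \ref{thi1.3} in the contrapositive direction, the existence of a non-trivial extreme cycle for $\{0,km\}$ forces $km$ to be incomplete.

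There is no real obstacle here: the argument is a one-line dilation check, the only things to verify being that the digits scale into $\{0,km\}$ (which uses only that $k\in\mathbb{N}$) and that integrality/non-triviality are preserved (which is immediate from Lemma \ref{lemi1.4} and $k\neq 0$). The odd hypothesis on $k$ is not strictly needed for the cycle argument, but is consistent with the standing convention in Definitions \ref{def1.1}--\ref{def1.2} that the digits $\{0,\cdot\}$ used to index spectra are taken odd.
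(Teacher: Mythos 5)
Your proposal is correct and follows essentially the same route as the paper's own proof: multiply the cycle relations by $k$, invoke Lemma \ref{lemi1.4} for integrality of the cycle points so that condition \eqref{eqi1.2.1} survives the dilation, and conclude via Theorem \ref{thi1.3}. The extra checks you include (non-triviality and distinctness of the dilated points) are fine and only make explicit what the paper leaves implicit.
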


\begin{proof}
If $m$ is incomplete, then by Theorem \ref{thi1.3}, there exists a non-trivial extreme cycle $\{x_0,\dots,x_{r-1}\}$ for the digits $\{0,m\}$. Multiplying the relations in Definition \ref{defi1.2} by $k$ we see that $\{kx_0,\dots,kx_{r-1}\}$ is a cycle for the digits $\{0,km\}$. With Lemma \ref{lemi1.4} we have that $x_i\in\bz$, so $kx_i\in\bz$ and therefore \eqref{eqi1.2.1} is satisfied for the points $kx_i$, and therefore we have a non-trivial extreme cycle for the digits $\{0,km\}$.
\end{proof}

\begin{theorem} \label{Gprop}
Let $m>3$ be an odd number not divisible by $3$. If any of the numbers $-1 (\mod m)$, $-2 (\mod m)$, $2 (\mod m)$, or $3(\mod m)$ is in $G_m$, then $m$ is complete. 
If $m>12$ and any of the numbers $5 (\mod m)$, $6 (\mod m)$, $7 (\mod m)$, $8(\mod m)$,  $9(\mod m)$,  $10(\mod m)$,  $11(\mod m)$ or $12(\mod m)$ is in $G_m$, then $m$ is complete. 
\end{theorem}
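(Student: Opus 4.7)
The plan is to assume a non-trivial extreme cycle exists, take its largest point $x_t$, and derive a contradiction from each hypothesized element of $G_m$ by tracking where $g x_t$ lands modulo $m$.

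First I would establish two basic constraints on $x_t$. By Lemma \ref{lem2.4}, $4 \mid x_t$. The cycle point $x_{t-1}$ preceding $x_t$ satisfies $x_{t-1} = 4 x_t - l_{t-1}$ for some $l_{t-1} \in \{0, m\}$; if $l_{t-1} = 0$ then $x_{t-1} = 4 x_t > x_t$ violates maximality, so $l_{t-1} = m$ and $x_{t-1} = 4 x_t - m \ge 0$, giving $x_t \ge m/4$. Together with $x_t \le m/3$ from Lemma \ref{lemi1.4} (strict since $3 \nmid m$), this confines $x_t$ to $m/4 \le x_t < m/3$.

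Next I would use the fact that the cycle points form a single $G_m$-orbit in $\bz_m$: because each cycle point lies in $[0, m/3) \subset [0, m)$, the least non-negative residue $g x_t \bmod m$ is itself a cycle point for every $g \in G_m$. Hence this residue must satisfy (i) $g x_t \bmod m \le x_t$ (maximality) and (ii) $g x_t \bmod m \equiv 0$ or $-m \pmod 4$ (Lemma \ref{lem2.3}). For $g \in \{-1, -2, 2, 3\}$, (i) alone is decisive: $g = 2$ forces $2 x_t \le x_t$; $g = 3$ uses $3 x_t < m$ (from $x_t < m/3$) to conclude $3 x_t \le x_t$; $g = -1$ forces $m - x_t \le x_t$, i.e., $x_t \ge m/2 > m/3$; $g = -2$ forces $m - 2 x_t \le x_t$, i.e., $x_t \ge m/3$, impossible since $3 \nmid m$.

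For $g \in \{5, 6, \ldots, 12\}$ I would write $g x_t = km + r$ with $0 \le r < m$; the bound $r \le x_t$ together with $x_t \in [m/4, m/3)$ restricts $k$ to the integers in $[(g-1)/4, g/3]$, giving at most two admissible values of $k$ per $g$. Each admissible $k$ falls to one of two devices: either the intersection $[km/g, km/(g-1)] \cap [m/4, m/3]$ collapses to the single point $m/4$ or $m/3$, which cannot be an integer since $m$ is odd and $3 \nmid m$; or a genuine subinterval survives, in which case I compute $r \equiv -km \pmod 4$ using $x_t \equiv 0 \pmod 4$, and observe that $k = 2$ yields $r \equiv 2 \pmod 4$ while $k = 3$ yields $r \equiv m \pmod 4$, each contradicting (ii) (since $m$ odd makes $0, -m, 2, m$ pairwise distinct mod $4$).

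The main obstacle is the bookkeeping through the eight values of $g$ together with their admissible $k$-values, but every sub-case submits to the two-method template above. The hypothesis $m > 12$ is only needed to ensure that $5, 6, \ldots, 12$ represent genuine non-trivial residues in $\bz_m$ distinct from those handled in the first list.
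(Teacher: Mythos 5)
Your proposal is correct, and its two halves relate differently to the paper's proof. For the first list $\{-1,-2,2,3\}$ you argue essentially as the paper does: each $c\in G_m$ maps the cycle into itself modulo $m$, and the residue either lands too high to be a cycle point (for $-1,-2$) or strictly exceeds the maximal cycle point (for $2,3$); your use of maximality in the $-1,-2$ cases in place of the paper's bound $x_j<m/3$ is an immaterial variation. For the second list your mechanics are genuinely different. The paper also starts from Lemma \ref{lem2.4} ($4\mid x_t$), but then moves \emph{forward} in the cycle: the successor of $x_t$ must be $x_N=x_t/4<m/12$, and then for every $c\in\{5,\dots,12\}$ simultaneously, $x_t=4x_N<cx_N\leq 12x_N<m$, so $cx_N(\mod m)=cx_N$ is a cycle point larger than the maximum --- one uniform contradiction, with no case analysis and no further use of Lemma \ref{lem2.3}. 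You instead move \emph{backward}: the predecessor $4x_t-m\geq 0$ gives the lower bound $x_t\geq m/4$ (a fact the paper never derives), and you then split $gx_t=km+r$ into eight cases, killing each admissible $k$ either by the mod-4 classification of Lemma \ref{lem2.3} (for $k=2,3$, where $r\equiv 2$ or $r\equiv m$ modulo $4$ is forbidden) or by pinching $x_t$ to the non-integer $m/4$ (for $g=5$, $k=1$, the one case where the congruence test is inconclusive), while $g=6$ dies outright because no integer $k$ lies in $[5/4,2)$. I verified all eight cases and they close as you claim; in fact each $g$ admits at most one $k$, not two. The trade-off: the paper's successor trick is shorter and handles all eight residues uniformly, whereas your predecessor bound $m/4\leq x_t<m/3$ is a sharper structural fact about the largest cycle point that could be useful elsewhere, at the price of bookkeeping; also, neither argument really uses $m>12$ beyond setup, so your closing remark about its role is harmless but not load-bearing.
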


\begin{proof}

Assume for contradiction's sake that $m$ is incomplete. Then there is a non-trivial extreme cycle $X = \{x_0, ..., x_{r-1} \}$ for the digit set $\{0, m \}$. From the relation between the cycle points,
\beq
x_{j+1} = \frac{x_j+b_j}{4},
\eeq
where $b_j \in \{0,m\}$, we have that $4 x_{j+1} \equiv x_j (\mod m)$. Thus,
\beq
4^{r-k} x_{0} \equiv x_0 (\mod m, k\in\{0,\dots,r\}),
\eeq
so, for all $k\in\bn$, the number $4^kx_0$ is congruent modulo $m$ with an element of the extreme cycle $X$. But then, by the hypothesis, there is a number $c\in\{-1,2,-2,3\}$ in $G_m$. The number $cx_0$ is congruent modulo $m$ with an element in $X$, and since $x_0$ is arbitrary in the cycle, we get that $cx_j$ is congruent to an element in $X$ for any $j$.

 In the following arguments we use the fact that since ${m}$ is not divisible by $3$, the condition on cycle points $0 < x_j \leq \frac{{m}}{3}$ implies $0 < x_j < \frac{{m}}{3}$.

If $c=-1$, then $-x_0 (\mod {m}) \in X$. Since $0<x_0 < \frac{{m}}{3}$, $-x_0 (\mod {m})> \frac{{m}}{3}$, a contradiction.

If $c=-2$, then $-2x_0 (\mod {m}) \in X$. Since $0<x_0 < \frac{{m}}{3}$, $-2x_0 (\mod {m})> \frac{{m}}{3}$, a contradiction.

If $c=2$, then $2x_j (\mod {m}) \in X$ for all $j$. Let $x_N$ be the largest element of the extreme cycle. Since $0<x_N < \frac{{m}}{3}$, $2 x_N (\mod {m}) = 2x_N$. This number is in $X$, a contradiction to the maximality of $x_N$.

If $c=3$, then $3x_j (\mod {m}) \in X$ for all $j$. Let $x_N$ be the largest element of the extreme cycle. Since $0<x_N < \frac{{m}}{3}$, $3 x_N (\mod {m}) = 3x_N$. This number is in $X$, a contradiction to the maximality of $x_N$.

If $m>12$ then, as before, there is a number $c\in\{5,6,7,8,9,10,11,12\}$, such that the number $cx_0$ is congruent modulo $m$ with an element in $X$, and since $x_0$ is arbitrary in the cycle, we get that $cx_j$ is congruent to an element in $X$ for any $j$.

 In the following arguments we use the fact that since ${m}$ is not divisible by $3$, the condition on cycle points $0 \leq x_j \leq \frac{{m}}{3}$ implies $0 \leq x_j < \frac{{m}}{3}$. Let $x_t$ be the largest element in the extreme cycle. We have
 $$0< x_t < \frac{m}{3}. $$
By the Lemma \ref{lem2.4}, $x_t$ is divisible by four. Therefore, dividing by four, we get the next element in the extreme cycle, called $x_N$, and we have
$$  x_N < \frac{m}{12}. $$
For $c \in \{5,6,7,8,9,10,11,12\}$, $ x_t < c x_N < m $, so $cx_N(\mod m)=cx_N$ is a point in $X$ bigger than $x_t$, a contradiction to the maximality of $x_t$.

\end{proof}

\begin{corollary}\label{cor3.4}
For $n\geq 1$ the numbers $4^n+1$, $4^n-3$, $2\cdot4^n-1$ and $2\cdot 4^n+1$ are complete. For $n\geq 3$, the numbers $4^n-5,4^n-7,4^n-9,4^n-11, 2\cdot 4^n-3,2\cdot 4^n-5$ are complete.
\end{corollary}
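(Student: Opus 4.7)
The plan is to apply Theorem \ref{Gprop} to each number on the list. The key observation is that the very way $m$ is written in each case --- as $\varepsilon\cdot 4^n\pm c$ with $\varepsilon\in\{1,2\}$ and $c$ small --- immediately exhibits a small residue of a power of $4$ modulo $m$, and hence a small element of the cyclic group $G_m$. That small element will fall into the list of residues for which Theorem \ref{Gprop} gives completeness, so the corollary reduces to a bookkeeping exercise.

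Concretely, for the first list I would compute in $\bz_m$: $4^n\equiv -1$ when $m=4^n+1$; $4^n\equiv 3$ when $m=4^n-3$; $4^{n+1}=2(2\cdot 4^n)\equiv 2$ when $m=2\cdot 4^n-1$; and $4^{n+1}\equiv -2$ when $m=2\cdot 4^n+1$. In each case the resulting element ($-1$, $3$, $2$, or $-2$) is visibly in $G_m$, which is exactly the hypothesis of the first clause of Theorem \ref{Gprop}, so completeness follows.

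For the second list (restricted to $n\geq 3$ so that $m>12$), the same device yields $4^n\equiv c\pmod m$ when $m=4^n-c$ for $c\in\{5,7,9,11\}$, and $4^{n+1}=2(2\cdot 4^n)\equiv 2c\pmod m$ when $m=2\cdot 4^n-c$ for $c\in\{3,5\}$, producing the residues $5$, $7$, $9$, $11$, $6$, $10$ in $G_m$. Each of these is in the range $\{5,6,\dots,12\}$, so the second clause of Theorem \ref{Gprop} applies and again completeness follows.

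What remains are the background hypotheses of Theorem \ref{Gprop}: that $m$ is odd, exceeds the relevant threshold ($3$ in the first list or $12$ in the second), and is not divisible by $3$. Parity and size are immediate from the formulas once the range of $n$ is fixed. The non-divisibility by $3$ reduces, via $4\equiv 1\pmod 3$, to checking the constant term modulo $3$ in each case, and this is where I expect the only real bookkeeping: one has to match the given list of constants against the allowed residues mod $3$ to see the hypothesis of Theorem \ref{Gprop} is genuinely satisfied. Once that is verified, Theorem \ref{Gprop} closes each case uniformly.
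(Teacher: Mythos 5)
Your residue computations are exactly the ones in the paper's proof: the paper works out $4^n\equiv-1\pmod{4^n+1}$ and $4^{n+1}\equiv2\pmod{2\cdot4^n-1}$ and dismisses the remaining cases with ``similarly,'' so in that respect you have reproduced its argument. The problem is the step you postponed as bookkeeping: the verification that $3\nmid m$ does not work out, and it fails unrecoverably. Since $4\equiv1\pmod3$, one gets $2\cdot4^n+1\equiv0\pmod3$, $4^n-7\equiv0\pmod3$, and $2\cdot4^n-5\equiv0\pmod3$, so for these three families the hypothesis of Theorem \ref{Gprop} is violated for every $n$ and the theorem never applies. Worse, no other proof can close these cases: $3$ is incomplete, so by Proposition \ref{pri1.5} every odd multiple of $3$ is incomplete. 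For instance $2\cdot4^1+1=9$ has the non-trivial extreme cycle $\{3\}$ (since $(3+9)/4=3$ and $e^{2\pi i\cdot2\cdot3}=1$), while $4^3-7=57=3\cdot19$ and $2\cdot4^3-5=123=3\cdot41$ are incomplete as multiples of $3$. In other words, the corollary as printed is false for three of its ten families; your instinct to check the constants modulo $3$ was exactly right, but carrying the check out refutes those cases rather than confirming them. The paper's own proof silently commits the same oversight you deferred.

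For the remaining seven families your argument (which is the paper's) is sound once the check is done: $4^n+1\equiv2$, $4^n-3\equiv1$, $2\cdot4^n-1\equiv1$, $4^n-5\equiv2$, $4^n-9\equiv1$, $4^n-11\equiv2$, and $2\cdot4^n-3\equiv2\pmod3$, so none is divisible by $3$; the residues you exhibit, namely $-1$, $3$, $2$, $5$, $9$, $11$, $6$, lie on the lists in Theorem \ref{Gprop}; and parity and the thresholds $m>3$, $m>12$ are immediate, with the one degenerate exception $m=4^1-3=1$, where Theorem \ref{Gprop} is inapplicable but completeness is trivial because, by Lemma \ref{lemi1.4}, extreme cycle points lie in $\left[0,\tfrac13\right]\cap\bz=\{0\}$. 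So your proposal proves the corollary for exactly the seven families for which it is true, and no more; a correct write-up should either restrict the statement to those families or replace $2\cdot 4^n+1$, $4^n-7$, $2\cdot 4^n-5$ by correct ones, rather than hope the modular arithmetic cooperates.
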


\begin{proof}
If $m=4^n+1$ then $4^n=-1(\mod m)$. Then use Theorem \ref{Gprop}. Similarly for $4^n-3, 4^n-5,4^n-7,4^n-9,4^n-11 $.

If $m=2\cdot 4^n-1$, then $4^{n+1}-2=2(2\cdot 4^n-1)$ so $4^{n+1}=2(\mod m)$. Then use Theorem \ref{Gprop}. Similarly for $2\cdot 4^n+1,2\cdot 4^n-3,2\cdot 4^n-5$. 
\end{proof}

\begin{theorem}\label{th3.3}
If $p$ is a prime number, $p>3$ and $n\in\bn$, then $p^n$ is complete. 
\end{theorem}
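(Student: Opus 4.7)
The plan is to reduce the theorem to Theorem \ref{Gprop} by showing that $G_{p^n}$ always contains at least one of $-1$, $-2$, or $2$ modulo $p^n$. Since $p>3$ is prime, $p^n>3$ is odd and not divisible by $3$, so Theorem \ref{Gprop} applies, and membership of any one of $\{-1,-2,2\}\subseteq\{-1,-2,2,3\}$ in $G_{p^n}$ already forces $p^n$ to be complete. No induction on $n$ will be required: the cyclicity of $U(\mathbb{Z}_{p^n})$, a standard fact for odd prime powers, makes the argument uniform in $n$, and in particular forces the unique element of order $2$ in $U(\mathbb{Z}_{p^n})$ to be $-1\pmod{p^n}$.

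I would let $d$ be the multiplicative order of $2$ modulo $p^n$. Then the order of $4=2^2$ equals $d/\gcd(d,2)$, and $G_{p^n}=\langle 4\rangle$ is a subgroup of $\langle 2\rangle$. I split into three cases based on the $2$-adic behavior of $d$. Case (i): $d$ odd. Then $\langle 4\rangle$ and $\langle 2\rangle$ have the same order $d$, so they coincide, and in particular $2\in G_{p^n}$. In the remaining cases write $d=2k$; since $(2^k)^2=1$ and $2^k\neq 1$, cyclicity of $U(\mathbb{Z}_{p^n})$ forces $2^k=-1$. Case (ii): $k$ even. Then $-1=2^k=4^{k/2}\in G_{p^n}$. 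Case (iii): $k$ odd. Then $-2=2\cdot 2^k=2^{k+1}=4^{(k+1)/2}\in G_{p^n}$.

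Every positive integer $d$ falls into exactly one of these three cases, so $G_{p^n}\cap\{-1,-2,2\}\neq\emptyset$, and Theorem \ref{Gprop} closes the argument. There is essentially no technical obstacle: the only conceptual step is to parametrize by the order of $2$ rather than the order of $4$, because $G_{p^n}$ is precisely the subgroup of even powers of $2$ (when $d$ is even), so asking whether $-1$ or $-2$ belongs to $G_{p^n}$ reduces to checking the parity of the exponent at which $-1$ first appears as a power of $2$.
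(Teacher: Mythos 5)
Your proof is correct and takes essentially the same route as the paper's: both reduce to Theorem \ref{Gprop} by exhibiting one of $-1$, $2$, $-2$ in $G_{p^n}$ using the structure of square roots in $U(\mathbb{Z}_{p^n})$. The only difference is cosmetic: you parametrize by the order of $2$ and invoke cyclicity of $U(\mathbb{Z}_{p^n})$, whereas the paper splits on the parity of $o_4(p^n)$ and invokes the fact that a quadratic congruence modulo $p^n$ has at most two solutions; your three cases collapse exactly onto the paper's two.
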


\begin{proof}
It is well known (see e.g. \cite[page 45]{IrRo90}), that the equation $x^2\equiv b(\mod p^n)$ has 0 or two solutions. Let $a$ be the smallest positive integer such that $4^a\equiv 1(\mod p^n)$. If $a$ is even, then we have $(4^{a/2})^2\equiv 1(\mod p^n)$ so $4^{a/2}\equiv \pm1(\mod p^n)$. Since $4^{a/2}\neq 1(\mod p^n)$ we get $4^{a/2}\equiv -1(\mod p^n)$. 

If $a$ is odd, then $(4^{\frac{a+1}2})^2\equiv 4(\mod p^n)$. Therefore $4^{\frac{a+1}{2}}\equiv \pm 2(\mod p^n)$.

In both cases, the result follows from Theorem \ref{Gprop}
\end{proof}

\begin{remark}\label{rem3.3}
The proof of Theorem \ref{th3.3} indicates that it is enough to have exactly two solutions for both equations $x^2\equiv 1(\mod m)$ and $x^2\equiv 4(\mod m)$, to obtain that $m$ is complete. 
But the only odd numbers for which this condition holds are the prime powers. Indeed, if $m=p_1^{n_1}\dots p_r^{n_r}$, with $r\geq2$ and $n_1,\dots, n_r>0$, then, by the Chinese Remainder Theorem, there exists an integer $x$ such that $x\equiv -1(\mod p_1^{r_1})$, $x\equiv 1(\mod p_2^{r_2}),\dots x\equiv 1(\mod p_r^{n_r})$. This implies that $x^2\equiv 1(\mod p_k^{r_k})$ for all $k$, and therefore $x^2\equiv 1(\mod m)$. Also, it is clear that $x\neq \pm 1(\mod m)$. 
\end{remark}

\section{Composite numbers}

In this section we study composite numbers and we present some conditions for a number to be primitive or complete. We base our conditions on the prime decomposition of the numbers and on the order of the number 4 in the multiplicative group $U(\bz_m)$.

We begin with some properties of $o_4(m)$ that help in our computations. 

\begin{definition}\label{def1.11}
For a prime number $p\geq 3$, we denote by $\iota_4(p)$ the largest number $l$ such that $o_4(p^l)=o_4(p)$. We say that $p$ is {\it simple} if $o_4(p)<o_4(p^2)$, i.e., $\iota_4(p)=1$. 
\end{definition}

\begin{remark} 
The first non-simple prime number is 1093 and $o_4(1093)=o_4(1093^2)=182$. 
\end{remark}


\begin{proposition}\label{pr3.9}
Let $m$ and $n$ be mutually prime odd integers. Then 
$$o_4(mn)=\lcm(o_4(m),o_4(n)).$$
\end{proposition}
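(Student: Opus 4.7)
The plan is to reduce the computation of $o_4(mn)$ to the orders modulo $m$ and modulo $n$ via the Chinese Remainder Theorem, exploiting $\gcd(m,n)=1$. Concretely, set $a=o_4(m)$, $b=o_4(n)$, $L=\lcm(a,b)$, and $c=o_4(mn)$. The goal is to show $c=L$ by proving each divides the other.

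First I would show $c \mid L$. Since $a \mid L$, we have $4^L \equiv 1 \pmod{m}$; similarly $4^L \equiv 1 \pmod{n}$. Because $\gcd(m,n)=1$, the Chinese Remainder Theorem gives $4^L \equiv 1 \pmod{mn}$, so by the minimality of $c$, $c \mid L$.

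Next I would show $L \mid c$. By definition $4^c \equiv 1 \pmod{mn}$, which forces $4^c \equiv 1 \pmod m$ and $4^c \equiv 1 \pmod n$. By minimality of $a$ and $b$, we get $a \mid c$ and $b \mid c$, hence $L=\lcm(a,b) \mid c$. Combined with the previous step, $c=L$.

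There is no real obstacle here: the proof is essentially the well-known fact that in a direct product $H_1 \times H_2$ of groups, the order of an element $(g_1,g_2)$ equals $\lcm(\operatorname{ord}(g_1),\operatorname{ord}(g_2))$, applied to the CRT isomorphism $U(\bz_{mn}) \cong U(\bz_m)\times U(\bz_n)$ under which $4 \bmod mn$ corresponds to $(4 \bmod m,\ 4 \bmod n)$. One could alternatively phrase the entire argument in one line by citing this general fact, but the elementary divisibility argument above is self-contained and avoids appealing to anything beyond the definition of $o_4$ and the CRT.
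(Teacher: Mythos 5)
Your proof is correct and follows essentially the same route as the paper's: both reduce $4^c\equiv 1\pmod{mn}$ to the simultaneous congruences modulo $m$ and modulo $n$ (using $\gcd(m,n)=1$) and then identify the least such exponent with the least common multiple of $o_4(m)$ and $o_4(n)$. Your version merely spells out the two divisibility directions that the paper's one-line argument compresses.
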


\begin{proof}
We have $a=o_4(mn)$ is the smallest integer such that $4^a\equiv 1(\mod mn)$. So $a$ is the smallest integer such that $4^a\equiv 1(\mod m)$ and $4^a\equiv 1(\mod n)$, which means that $a$ is the smallest integer that is divisible by $o_4(m)$ and $o_4(n)$ so it is the lowest common multiple of these two numbers. 
\end{proof}

\begin{proposition}\label{pr3.10}
Let $p$ be an odd prime number.  Then $o_4(p^k)=o_4(p)$ for $k\leq \iota_4(p)$ and $o_4(p^k)=p^{k-\iota_4(p)}o_4(m)$ for all $k\geq \iota_4(p)$. 
\end{proposition}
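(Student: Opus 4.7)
The plan is to analyze the sequence $\alpha_k := o_4(p^k)$ and show that it satisfies a clean dichotomy at each step: either $\alpha_{k+1}=\alpha_k$ or $\alpha_{k+1}=p\alpha_k$, and once a ``jump'' of factor $p$ has occurred, every subsequent step is also a jump. Both claims of the proposition follow immediately.

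First I would set up divisibility: since $4^a\equiv 1\pmod{p^{k+1}}$ implies $4^a\equiv 1\pmod{p^k}$, the sequence $\alpha_k$ is nondecreasing and $\alpha_k\mid\alpha_{k+1}$. Writing $4^{\alpha_k}=1+up^k$ for some integer $u$, I would expand
\begin{equation}
4^{p\alpha_k}=(1+up^k)^p=1+pup^k+\sum_{j=2}^{p}\binom{p}{j}u^jp^{jk},
\label{eq:binom}
\end{equation}
and check that every term after $up^{k+1}$ has $p$-adic valuation at least $k+2$. This uses (a) that $p$ is odd, so $p\mid\binom{p}{j}$ for $1\le j\le p-1$, which bumps the valuation of the $j$-th term to at least $1+jk$, and (b) the inequality $jk\ge k+1$ for $j\ge 2, k\ge 1$, together with $pk\ge k+2$ for the last term. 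Hence $4^{p\alpha_k}\equiv 1+up^{k+1}\pmod{p^{k+2}}$, which shows in particular that $\alpha_{k+1}\mid p\alpha_k$, so indeed $\alpha_{k+1}\in\{\alpha_k,\,p\alpha_k\}$.

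Next I would establish the key non-reversibility: if $\alpha_{k+1}=p\alpha_k$ (i.e., a jump occurs between levels $k$ and $k+1$), then necessarily $p\nmid u$ in $4^{\alpha_k}=1+up^k$, for otherwise $4^{\alpha_k}\equiv 1\pmod{p^{k+1}}$ would force $\alpha_{k+1}=\alpha_k$. But then the expansion \eqref{eq:binom} exhibits $4^{p\alpha_k}=1+u'p^{k+1}$ with $u'\equiv u\pmod p$, so $p\nmid u'$. Applying the same dichotomy one level up shows $\alpha_{k+2}=p\alpha_{k+1}$. By induction, once a jump has occurred, every subsequent step is a jump by a factor of $p$.

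Combining the pieces, by the definition of $\iota:=\iota_4(p)$ we have $\alpha_k=\alpha_1=o_4(p)$ for $1\le k\le \iota$ and $\alpha_{\iota+1}=p\alpha_\iota=p\,o_4(p)$; the non-reversibility then yields $\alpha_k=p^{k-\iota}o_4(p)$ for every $k\ge\iota$, which is exactly the claim (reading $o_4(m)$ in the statement as the intended $o_4(p)$). The main obstacle is simply bookkeeping in the binomial expansion and pinpointing where the hypothesis ``$p$ odd'' is used, namely in guaranteeing $p\mid\binom{p}{2}$, which is exactly what supplies the extra factor of $p$ that promotes $p^{2k}$ to valuation at least $1+2k\ge k+2$.
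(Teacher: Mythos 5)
Your proof is correct and follows essentially the same route as the paper: both hinge on the binomial expansion $(1+up^k)^p\equiv 1+up^{k+1}\pmod{p^{k+2}}$ (using $p$ odd to control the $j=2$ term) to get the dichotomy $o_4(p^{k+1})\in\{o_4(p^k),\,p\,o_4(p^k)\}$ together with the persistence of the jump, which the paper packages as a single induction carrying the extra hypothesis $o_4(p^k)<o_4(p^{k+1})$. You are also right that $o_4(m)$ in the statement should read $o_4(p)$.
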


\begin{proof} For $k\leq \iota_4(p)$, the statement is trivial. 
Assume by induction that, for $k\geq \iota_4(p)$, $a_k:=o_4(p^k)=p^{k-\iota_4(p)}o_4(p)$ and $o_4(p^k)<o_4(p^{k+1})$. Then there exists $q$ not divisible by $p$ such that 
$4^{a_k}=1+qp^k$. Raise this to power $p$ using the binomial formula:
$$4^{pa_k}=1+p\cdot qp^k+q'p^{k+2},$$
for some integer $q'$. This implies that $a_{k+1}=o_4(p^{k+1})$ divides $pa_k$ and also that $pa_k$ is not $o_4(p^{k+2})$. Since $4^{a_{k+1}}\equiv 1(\mod p^{k+1})$ we have also $4^{a_{k+1}}\equiv 1(\mod p^k)$ so $a_k$ divides $a_{k+1}$. Thus $a_{k+1}$ is a number that divides $pa_k$ and is divisible by $a_k$, and by the induction hypothesis $a_{k+1}>a_k$. Thus $a_{k+1}=pa_k=p^{k+1-\iota_4(p)}o_4(p)$. Also, $o_4(p^{k+1})=pa_k\neq o_4(p^{k+2})$ so $o_4(p^{k+1})<o_4(p^{k+2})$. Using induction we obtain the result. 

\end{proof}

\begin{proposition}\label{pr3.14}
Let $p_1,\dots,p_r$ be distinct odd primes and $k_1,\dots,k_r\geq 0$. For $i\in\{1,\dots,r\}$, let $j_i\geq0$ be the largest integer such that $p_i^{j_i}$ divides $\lcm(o_4(p_1),\dots,o_r(p_r))$. Then 
\begin{equation}
o_4(p_1^{k_1}\dots p_r^{k_r})=\left(\prod_{i=1}^rp_i^{\max\{k_i-j_i-\iota_4(p_i),0\}}\right)\lcm(o_4(p_1),\dots,o_4(p_r)).
\label{eq1.14.1}
\end{equation}
\end{proposition}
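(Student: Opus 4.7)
The plan is to compute the $p$-adic valuation of both sides for each prime $q$ and show they agree. First, I would apply Proposition \ref{pr3.9} inductively to the pairwise coprime factors $p_1^{k_1},\dots,p_r^{k_r}$ to obtain
$$o_4(p_1^{k_1}\cdots p_r^{k_r})=\lcm\bigl(o_4(p_1^{k_1}),\dots,o_4(p_r^{k_r})\bigr).$$
Then, by Proposition \ref{pr3.10}, each factor satisfies $o_4(p_i^{k_i})=p_i^{\alpha_i}\,o_4(p_i)$ where $\alpha_i:=\max\{k_i-\iota_4(p_i),0\}$. Writing $L:=\lcm(o_4(p_1),\dots,o_4(p_r))$, the task reduces to showing
$$\lcm\bigl(p_1^{\alpha_1}o_4(p_1),\dots,p_r^{\alpha_r}o_4(p_r)\bigr)=\Bigl(\prod_{i=1}^r p_i^{\max\{k_i-j_i-\iota_4(p_i),0\}}\Bigr)L.$$

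The crucial observation is that by Fermat's little theorem, $o_4(p_i)$ divides $p_i-1$, hence $\gcd(o_4(p_i),p_i)=1$, so $v_{p_i}(o_4(p_i))=0$. Consequently $j_i=v_{p_i}(L)=\max_{k\neq i}v_{p_i}(o_4(p_k))$, and the $p_i^{\alpha_i}$ prefactor on the left-hand side contributes a power of $p_i$ that is \emph{orthogonal} to the prime factorization of $o_4(p_i)$.

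Next I would check the $q$-adic valuation of both sides for each prime $q$. When $q\notin\{p_1,\dots,p_r\}$, the $p_i^{\alpha_i}$ prefactors are invisible to $v_q$, so both sides reduce to $v_q(L)$. When $q=p_i$, the right-hand side gives $\max\{k_i-j_i-\iota_4(p_i),0\}+j_i=\max\{k_i-\iota_4(p_i),j_i\}$ (using $j_i\geq 0$). For the left-hand side,
$$v_{p_i}\bigl(\lcm(p_1^{\alpha_1}o_4(p_1),\dots,p_r^{\alpha_r}o_4(p_r))\bigr)=\max_k v_{p_i}\bigl(p_k^{\alpha_k}o_4(p_k)\bigr).$$
For $k\neq i$, the term equals $v_{p_i}(o_4(p_k))$, whose maximum over $k\neq i$ is $j_i$. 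For $k=i$, the term equals $\alpha_i+v_{p_i}(o_4(p_i))=\alpha_i$. Hence the maximum is $\max\{\alpha_i,j_i\}=\max\{k_i-\iota_4(p_i),0,j_i\}=\max\{k_i-\iota_4(p_i),j_i\}$, matching the right-hand side.

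I do not expect a real obstacle here; the proof is essentially bookkeeping with valuations. The only subtle point is justifying that the power $p_i^{\alpha_i}$ coming from Proposition \ref{pr3.10} combines cleanly with $L$, and this is exactly what the Fermat observation $p_i\nmid o_4(p_i)$ achieves, allowing the prefactor to be pulled out of the lcm without interference from the $o_4(p_i)$ factors.
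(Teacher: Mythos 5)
Your proof is correct and follows essentially the same route as the paper: both arguments reduce, via Propositions \ref{pr3.9} and \ref{pr3.10}, to the identity $o_4(p_1^{k_1}\cdots p_r^{k_r})=\lcm_{1\le i\le r}\bigl(p_i^{\max\{k_i-\iota_4(p_i),0\}}o_4(p_i)\bigr)$ and then check how each prefactor $p_i^{\max\{k_i-\iota_4(p_i),0\}}$ alters the lcm relative to $\lcm(o_4(p_1),\dots,o_4(p_r))$. The only difference is one of presentation: the paper's case analysis (each prefactor either ``already divides'' the lcm or ``contributes'' an extra power) is your prime-by-prime valuation computation in compressed form, though your explicit appeal to Fermat's little theorem for $p_i\nmid o_4(p_i)$ usefully spells out a fact that the paper's terse argument relies on silently.
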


\begin{proof}
With Propositions \ref{pr3.9} and \ref{pr3.10}, we have 
$$o_4(p_1^{k_1}\dots p_r^{k_r})=\lcm\left(p_i^{\max\{k_i-\iota_4(p_i),0\}}o_4(p_i); i\in\{1,\dots,r\}\right).$$
If $k_i-\iota_4(p_i)\leq j_i$, then $p_i^{\max\{k_i-\iota_4(p_i),0\}}$ already divides $\lcm(o_4(p_1),\dots,o_4(p_r))$ so it does not contribute to the right-hand side. If $k_i-\iota_4(p_i)> j_i$, then 
$p_i^{\max\{k_i-\iota_4(p_i),0\}}$ contributes with $p_i^{k_i-\iota_4(p_i)-j_i}$ to the right-hand side. Then \eqref{eq1.14.1} follows.

\end{proof}

The next proposition gives us some information about the structure of extreme cycles for primitive numbers.

\begin{proposition}\label{pr3.8}
Let $m$ be a primitive number and let $C=\{x_0,\dots,x_{p-1}\}$ be an extreme cycle. Then:
\begin{enumerate}
	\item The length $p$ of the cycle is equal to $o_4(m)$. 
	\item Every element of the cycle $x_i$ is mutually prime with $m$.
	\item The extreme cycle $C$ is a coset of the group $G_m$ in $U(\bz_m)$, $C=x_0G_m$.

\end{enumerate}
\end{proposition}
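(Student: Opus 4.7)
The plan is to exploit the congruence obtained by reading the cycle relation $4x_{j+1}=x_j+l_j$ modulo $m$, namely $x_{j+1}\equiv 4^{-1}x_j\pmod m$, where $4^{-1}$ exists because $m$ is odd. I would handle the three items in the order (ii), (i), (iii), since coprimality with $m$ is precisely what allows one to view the cycle inside $U(\bz_m)$ and use the action of $G_m$. Throughout, I take $C$ to be non-trivial, as only then do the stated conclusions have content (the trivial cycle $\{0\}$ has length $1\neq o_4(m)$ in general and is not a coset of $G_m$ in $U(\bz_m)$).

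For (ii), I would argue by contradiction. Let $d=\gcd(x_0,m)$ and assume $d>1$. Since $4x_{j+1}=x_j+l_j$ with $l_j\in\{0,m\}$ and $\gcd(4,m)=1$, an induction along the cycle shows $d\mid x_j$ for every $j$. Dividing all the cycle equations by $d$ gives $(x_j/d+l_j/d)/4=x_{j+1}/d$ with $l_j/d\in\{0,m/d\}$, and the $x_j/d$ are integers so condition \eqref{eqi1.2.1} is automatic. Hence $\{x_0/d,\dots,x_{p-1}/d\}$ is a non-trivial extreme cycle (non-trivial because $x_0>0$ for a non-trivial $C$, as $x_0=0$ forces the trivial cycle via Lemma \ref{lemi1.4}) for the digits $\{0,m/d\}$, where $m/d$ is a proper divisor of $m$; this contradicts the primitivity of $m$ via Theorem \ref{thi1.3}. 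Coprimality of every $x_j$ with $m$ then follows either by running the same argument starting from $x_j$, or by noting that $x_j\equiv 4x_{j+1}\pmod m$ with $\gcd(4,m)=1$ implies $\gcd(x_j,m)=\gcd(x_{j+1},m)$.

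For (i), iterating $x_{j+1}\equiv 4^{-1}x_j\pmod m$ gives $x_j\equiv 4^{-j}x_0\pmod m$. Setting $j=p$ in $x_p=x_0$ and using that $x_0\in U(\bz_m)$ yields $4^p\equiv 1\pmod m$, so $o_4(m)\mid p$. Conversely, the cycle points are distinct integers in $[0,m/3]\subset[0,m)$, hence distinct modulo $m$, so the residues $4^{-j}x_0$ are distinct for $j=0,\dots,p-1$, forcing $p\le|G_m|=o_4(m)$. Together, $p=o_4(m)$. For (iii), as $j$ ranges over $\{0,\dots,o_4(m)-1\}$ the residues $4^{-j}$ exhaust the cyclic group $G_m$, so $\{x_j\bmod m\}=x_0G_m$ inside $U(\bz_m)$; since each $x_j$ already lies in $[0,m)$, the natural identification of the integer set $C$ with its image in $\bz_m$ gives $C=x_0G_m$.

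The main obstacle is the contradiction step in (ii): one must verify carefully that the divided set $\{x_j/d\}$ really is a bona fide non-trivial extreme cycle for $\{0,m/d\}$ (integrality of the entries, the correct recurrence with digits in $\{0,m/d\}$, and the extremality condition \eqref{eqi1.2.1}), so that the primitivity hypothesis and Theorem \ref{thi1.3} can be invoked. Once this is in place, items (i) and (iii) are essentially bookkeeping inside the cyclic group $G_m$.
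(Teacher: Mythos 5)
Your proof is correct and follows essentially the same route as the paper's: the gcd/division argument contradicting primitivity for (ii), the congruence $x_j\equiv 4^{-j}x_0\pmod m$ combined with distinctness of the cycle points in $[0,m/3]$ to pin down $p=o_4(m)$ for (i), and the resulting identification of $C$ with the coset $x_0G_m$ for (iii). Your explicit restriction to non-trivial cycles and the two-sided divisibility bookkeeping ($o_4(m)\mid p$ and $p\le o_4(m)$) are, if anything, slightly cleaner than the paper's phrasing of the same argument.
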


\begin{proof}
Suppose $x_0$ and $m$ have a common divisor $d>1$. Then, since $x_1=\frac{x_0+l_0}{4}$ we have that $4x_1$ is divisible by $d$ and since $d$ is odd it follows that $d$ divides $x_1$. By induction $d$ divides all elements of the cycle. But then $\{x_0/d,x_1/d,\dots,x_{p-1}/d\}$ is an extreme cycle for the digits $\{0,m/d\}$. But this contradicts the fact that $m$ is primitive. 

We have $4^jx_i\equiv x_{(i-j)(\mod p)}(\mod m)$ for all $i,j\in\{0,\dots,p-1\}$. Therefore $4^px_0\equiv x_0(\mod m)$. Since $x_0$ is in $U(\bz_m)$, we get that $4^p\equiv 1(\mod m)$, so $p$ divides $o_4(m)=:a$. Also, we have $x_0\equiv 4^ax_0\equiv x_{-a(\mod p)}(\mod m)$ so, since all the elements of the cycle are in $[0,m/3]$ we get that $x_0=x_{-a(\mod p)}$. Therefore $a$ is divisible by $p$. Thus $p=a=o_4(m)$. 

Since the length of the cycle is $o_4(m)$ which is the order of the group $G$, and since $4^jx_0(\mod m)=x_{-j(\mod p)}$, we get that $x_0G_m=C$.

\end{proof}

Together with Lemma \ref{lem1.13} and Lemma \ref{lem2.14}, the next lemma is the key technical point in our investigation. It allows us to verify completeness by induction. 
\begin{lemma}\label{lem1.16}
Let $a,b\geq 1$ be odd numbers. Assume that $o_4(ab) \geq \frac{2a+15}{12} o_4(b)$. Then $ab$ is not primitive.  
\end{lemma}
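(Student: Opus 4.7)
The plan is a proof by contradiction: suppose $ab$ is primitive. Since the smallest primitive number is $3$, primitivity of $ab$ excludes $3 \mid ab$ (otherwise the proper divisor $3$ would already be incomplete), and Lemma \ref{lemi1.4} then places every non-trivial extreme cycle point strictly in $(0, ab/3)$. By Proposition \ref{pr3.8}, the non-trivial extreme cycle $C$ for the digits $\{0, ab\}$ is a coset $x_0 G_{ab} \subseteq U(\bz_{ab})$ of cardinality $o_4(ab)$, and every element of $C$ is coprime to $ab$.

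Next I would exploit the canonical reduction $\pi \colon U(\bz_{ab}) \to U(\bz_b)$. Since $\pi$ maps $G_{ab}$ onto $G_b$, every fiber of $\pi|_C$ has the same cardinality $f := o_4(ab)/o_4(b)$, which by hypothesis is at least $(2a+15)/12$. Fix one such fiber $F = \{w_1 < w_2 < \cdots < w_f\}$ and write $w_i = w_1 + k_i\,b$ with integers $0 = k_1 < k_2 < \cdots < k_f \leq \lfloor a/3 \rfloor$. The $f-1$ consecutive differences $k_{i+1}-k_i$ are positive integers summing to at most $\lfloor a/3 \rfloor$; if each were $\geq 2$ we would get $f \leq (a+6)/6$, whereas the hypothesis $(2a+15)/12 > (a+6)/6$ (equivalent to $15 > 12$) rules this out. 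Hence at least one pair of cycle elements is of the form $y,\, y+b \in C$.

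Finally, to extract the contradiction, set $\eta := (y+b)\,y^{-1} \in U(\bz_{ab})$. As a ratio of two coset representatives it lies in $G_{ab}$, and it satisfies $\eta \equiv 1 \pmod b$. Hence multiplication by $\eta$ permutes $C$ and stabilizes every $\pi$-fiber. Decomposing via the Chinese Remainder Theorem $\bz_{ab} \cong \bz_a \times \bz_b$ (the case $\gcd(a,b) > 1$ is disposed of separately, since then $ab/\gcd(a,b)$ is a proper divisor that inherits an incomplete cycle from $C$), the induced action on the $k$-coordinates of $F$ is an affine map $T(k) = (1+\beta)k + \alpha \pmod a$ with $\beta = b\, w_1^{-1} \bmod a$ and a suitable $\alpha$. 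Applying $\eta$ to the overall cycle maximum $x_t$---divisible by $4$ by Lemma \ref{lem2.4}---produces in integer representation either a cycle element strictly exceeding $x_t$, contradicting maximality, or else a ``wrap-around'' whose quantitative form, combined with Theorem \ref{Gprop}, yields a small element of $G_{ab}$ in $\{-1, \pm 2, 3, 5, \ldots, 12\}$; either conclusion contradicts the primitivity of $ab$.

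The hardest step is the final one: the pair $(y, y+b)$ does not immediately exhibit a forbidden small element of $G_{ab}$, so one must track the affine action $T$ carefully---across all $o_4(b)$ fibers if necessary, each contributing its own such pair via the same pigeonhole---to rule out every residual configuration. The quantitative slack between the hypothesis $(2a+15)/12$ and the weaker pigeonhole threshold $(a+6)/6$ is what provides room for this global argument, and appears to dictate the precise constant $15$ appearing in the statement.
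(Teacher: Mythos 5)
Your proof breaks down at the final step, and the breakdown is fatal. The setup (Proposition \ref{pr3.8}, the reduction $h\colon G_{ab}\to G_b$, fibers of equal size $M=o_4(ab)/o_4(b)\geq \frac{2a+15}{12}$) matches the paper, and your pigeonhole inside one fiber is arithmetically correct: the indices $k_1<\cdots<k_f$ lie in $[0,a/3]$ and $f\geq\frac{2a+15}{12}>\frac{a+6}{6}$, so two cycle elements differ by exactly $b$. But nothing you build on the pair $(y,y+b)$ is a proof. Multiplication by $\eta=(y+b)y^{-1}$ does permute the cycle and fix residues mod $b$, but iterating $\eta$ does \emph{not} produce the progression $y,y+2b,\dots$: since $\eta\equiv 1\pmod b$ but not mod $a$, one has $\eta(y+b)\equiv y+b+\eta b\not\equiv y+2b \pmod{ab}$ in general, so no cycle element exceeding the maximum is ever exhibited; and the alternative --- that a ``wrap-around'' forces a small element of $G_{ab}$ in $\{-1,\pm2,3,5,\dots,12\}$ so that Theorem \ref{Gprop} applies --- is asserted with no supporting computation and does not follow from the affine-orbit structure of $T$. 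You concede this yourself (``one must track the affine action $T$ carefully\dots to rule out every residual configuration''), which is exactly the missing proof. A second error: your disposal of $\gcd(a,b)>1$ is wrong. Cycles do not descend from $ab$ to the proper divisor $ab/\gcd(a,b)$; Proposition \ref{pri1.5} only lifts cycles from divisors to multiples, and if cycles descended to divisors no composite number could ever be primitive. The lemma genuinely needs this case (e.g.\ $a=b=5$, where the hypothesis holds and the conclusion is that $25$ is not primitive).

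The paper closes the argument with two ingredients you never use, and they make the proof short. First, since $b$ is complete (it is a proper divisor of the putatively primitive $ab$), Proposition \ref{pr2.5} forces every coset of $G_b$ in $U(\bz_b)$ to contain an element $>b/2$; hence one may choose the fiber base point $y_0=x_0\ (\mathrm{mod}\ b)>\frac b2$. Second, Lemma \ref{lem2.3} says cycle points are $\equiv 0$ or $-ab\pmod 4$, so the $M$ fiber indices $k$ (those with $y_0+kb$ in the cycle) occupy only two residue classes mod $4$, whence the largest satisfies $k\geq 2M-3$. Then
$$y_0+kb>\frac b2+(2M-3)b\geq \frac b2+\frac{2a-3}{6}\,b=\frac{ab}{3},$$
contradicting the containment of the cycle in $[0,\frac{ab}{3}]$ from Lemma \ref{lemi1.4}. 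In short, the hypothesis on $M$ should be used to push the \emph{top} of a single fiber above $ab/3$; your pigeonhole points the same inequality in the unproductive direction (forcing two fiber elements to be adjacent), and from there no contradiction is actually reached.
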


\begin{proof}
Suppose that $ab$ is primitive. Since $a>1$, $b$ is a proper divisor of $ab$ so $b$ is complete. By Proposition \ref{pr3.8}, there exists an extreme cycle $C$ and it is equal to a coset $x_0G_{ab}$ of the multiplicative group generated by 4 in $U(\bz_{ab})$. 
Consider the map $h:G_{ab}\rightarrow G_{b}$, $h(x)=x(\mod b)$. Then, $h$ is a homomorphism and it is onto. 
Let $|G_{ab}|=o_4(ab)=M o_4(b)=M |G_b|$, so that $h$ is an $M$-to-1 map, where $M\geq \frac{2a+15}{12}$.
Then the map $h':x_0G_{ab}\rightarrow (x_0(\mod b))G_b$, $h'(x_0x)=(x_0x)(\mod b)$, is also an $M$-to-1 map ($x_0$ is invertible in $\bz_{ab}^\times$, by Proposition \ref{pr3.8}, hence also in $\bz_b^\times$). 

So, in particular, there are exactly $M$ elements in $x_0G_{ab}$ which are mapped into $x_0(\mod b)$. These elements can be written $ x_0(\mod b) + kb (\mod ab)$ for $M$ different values of $k$, each in the set $\{0, \dots, a-1 \}$. Since $b$ is complete, by Proposition \ref{pr2.5}, the coset $(x_0(\mod b))G_b$ contains an element $> \frac{b}{2}$. Therefore we can assume $y_0:=x_0(\mod b)>\frac b2$. 

From Lemma \ref{lem2.3}, we know that the points in the cycle are congruent to 0 or $-ab$ modulo 4. So $y_0+kb\equiv 0$ or $-ab$ modulo 4, for all $M$ values of $k$ such that this point is in the extreme cycle. Since $b$ is odd, it has an inverse, $c$ in $\bz_4^\times$ and we have that $k\equiv -cy_0(\mod 4)$ or $k\equiv c(-ab-y_0)\mod 4$. Therefore the values of $k$ here belong to only two equivalence classes modulo 4, so in each set $\{4n,4n+1,4n+2,4n+3\}$ there are at most 2 values of $k$. Therefore, if we take the largest such $k$, if $M$ is even, then $k\geq 4(\frac M2-1)+1=2M-3$. If $M$ is odd, then the largest $k$ is at least $4(\frac{M-1}2-1)+4=2M-2$. So in both cases $k\geq 2M-3$. Then 
$$y_0+kb> \frac b2+(2M-3)b\geq \frac{ab}3,$$
and this contradicts the fact that an extreme cycle is contained in $[0,\frac{ab}3]$, by Lemma \ref{lemi1.4}.

\end{proof}

\begin{remark}\label{rem1.16}
We will use Lemmas \ref{lem1.16} and later Lemma \ref{lem2.14} to inductively prove that some numbers are complete: start with a prime power. We know these are complete, from Theorem \ref{th3.3}. Then, multiply by some number in such a way that one of the lemmas applies. Repeat this inductively. 
\end{remark}

The next result shows that, if we fix the prime numbers that appear in the decomposition, then we can check the completeness of all the numbers that have only these primes in the decomposition, by checking this property for the first finitely many such numbers. 
\begin{theorem}\label{th1.15}
Let $p_1,\dots,p_r$ be distinct odd primes. For $i\in\{1,\dots,r\}$, let $j_i\geq0$ be the largest number such that $p_i^{j_i}$ divides $\lcm(o_4(p_1),\dots,o_4(p_r))$. Assume that 
$p_1^{\iota_4(p_1)+j_1}\dots p_r^{\iota_4(p_r)+j_r}$ is complete.

Then $p_1^{k_1}\dots p_r^{k_r}$ is complete for any $k_1,\dots,k_r\geq0$.

\end{theorem}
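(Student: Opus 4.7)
The plan is to prove the theorem by strong induction on the value of $m = p_1^{k_1}\cdots p_r^{k_r}$, using Lemma \ref{lem1.16} to push completeness up from the hypothesized base case $N := p_1^{\iota_4(p_1)+j_1}\cdots p_r^{\iota_4(p_r)+j_r}$ one prime factor at a time, with Proposition \ref{pr3.14} supplying the required relation between $o_4(m)$ and $o_4(m/p_i)$.

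In the inductive step I will split into two cases. If $k_i \le \iota_4(p_i)+j_i$ for every $i$, then $m$ divides $N$, and since $N$ is complete by hypothesis, the contrapositive of Proposition \ref{pri1.5} immediately gives that $m$ is complete. Otherwise, I will pick an index $i$ with $k_i > \iota_4(p_i)+j_i$, set $m' := m/p_i$, and invoke the inductive hypothesis on the strictly smaller number $m'$ to conclude that it is complete.

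Next I will apply Proposition \ref{pr3.14} to both $m$ and $m'$: the common factor $\lcm(o_4(p_1),\dots,o_4(p_r))$ and the contributions from primes $p_l$ with $l \ne i$ cancel in the ratio; the inequality $k_i > \iota_4(p_i)+j_i$ guarantees that the max in the $p_i$-contribution is attained by its first argument for both $m$ and $m'$, leaving the ratio equal to exactly $p_i$. Then I will apply Lemma \ref{lem1.16} with $a = p_i$ and $b = m'$: the required hypothesis $o_4(ab) \ge \frac{2a+15}{12}\,o_4(b)$ becomes $p_i \ge (2p_i+15)/12$, equivalent to $p_i \ge 3/2$, which holds for every odd prime. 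This yields that $m = ab$ is not primitive. Since every proper divisor of $m$ is again of the form $\prod p_i^{k_i''}$ with strictly smaller value, the inductive hypothesis makes every proper divisor of $m$ complete, ruling out the possibility that $m$ is incomplete via an incomplete proper divisor. Hence $m$ itself must be complete, closing the induction.

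I expect the main obstacle to be the ratio identity $o_4(m) = p_i \cdot o_4(m')$. The $\lcm$-structure in Proposition \ref{pr3.14} is delicate: one has to check that the extra factor of $p_i$ picked up when the exponent $k_i$ is raised by one is genuinely new, i.e.\ not already absorbed into $\lcm(o_4(p_1),\dots,o_4(p_r))$, and this is exactly what the threshold $\iota_4(p_i)+j_i$ encodes. Getting the case split at this threshold, so that Lemma \ref{lem1.16} actually applies, is the point where the specific choice of exponents in the base case $N$ becomes essential; everything else reduces to bookkeeping with the inductive hypothesis.
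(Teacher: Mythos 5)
Your proof is correct and follows essentially the same route as the paper: both hinge on Proposition \ref{pr3.14} to show that an exponent exceeding $\iota_4(p_i)+j_i$ forces $o_4(m)$ to pick up an extra factor of $p_i$, and then on Lemma \ref{lem1.16} (whose hypothesis reduces to $p_i\geq 3/2$, resp.\ $a\geq 3/2$) to rule out primitivity. The only difference is organizational --- you run a strong induction peeling off one factor of $p_i$ at a time, while the paper takes a minimal primitive counterexample and strips the entire excess power $p_1^{k_1-\iota_4(p_1)-j_1}$ in a single application of the lemma.
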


\begin{proof}

Suppose there are some numbers $k_1,\dots,k_r\geq0$ such that $m=p_1^{k_1}\dots p_r^{k_r}$ is not complete. Therefore, a proper divisor of this number has to be primitive, relabeling the powers $k_i$, we can assume $m$ is primitive.  The hypothesis implies that for at least one $i$, $k_i\geq\iota_4(p_i)+j_i+1$. Relabeling again, we can assume $k_1\geq \iota_4(p_1)+j_1+1$. We have, with Proposition \ref{pr3.14}:
$$o_4(p_1^{k_1}\dots p_r^{k_r})=p_1^{k_1-\iota_4(p_1)-j_1}o_4(p_1^{\iota_4(p_1)+j_1}p_2^{k_2}\dots p_r^{k_r}).$$
Using Lemma \ref{lem1.16}, with $a=p_1^{k_1-\iota_4(p_1)-j_1}$, $b=p_1^{\iota_4(p_1)+j_1}p_2^{k_2}\dots p_r^{k_r}$, we get a contradiction.
\end{proof}

We performed a computer check to find all the primitive numbers less than $10^7$. The results are listed in Table \ref{tab1}. Using this and Theorem \ref{th1.15}, we get the next Corollary.
\begin{corollary}\label{cor1.12}
Let $p_1,\dots,p_r$ be distinct odd primes. For $i\in\{1,\dots,r\}$, let $j_i\geq0$ be the largest number such that $p_i^{j_i}$ divides $\lcm(o_4(p_1),\dots,o_4(p_r))$. Assume that $p_1^{\iota_4(p_1)+j_1}\dots p_r^{\iota_4(p_r)+j_r}<10^7$ and that the set $\{p_1,\dots,p_r\}$ does not contain any of the lists in the second column of Table \ref{tab1}. Then  $p_1^{k_1}\dots p_r^{k_r}$ is complete for any $k_1,\dots,k_r\geq0$.  
\end{corollary}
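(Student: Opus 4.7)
The plan is to reduce to Theorem \ref{th1.15}, which already collapses the infinite family $\{p_1^{k_1}\cdots p_r^{k_r}:k_i\geq 0\}$ to the single product $N:=p_1^{\iota_4(p_1)+j_1}\cdots p_r^{\iota_4(p_r)+j_r}$. So it suffices to show that, under the two stated hypotheses, $N$ itself is complete; the corollary then follows immediately by applying Theorem \ref{th1.15}.

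To verify completeness of $N$, I would invoke the principle stated in the introduction (an immediate consequence of Proposition \ref{pri1.5} together with Definition \ref{defprim}) that an odd number is complete if and only if none of its divisors is primitive. So it is enough to rule out primitive divisors of $N$. Let $d$ be any divisor of $N$; then $d\leq N<10^7$, and every prime in the factorization of $d$ lies in $\{p_1,\dots,p_r\}$. Since the enumeration in Table \ref{tab1} is exhaustive for primitive numbers below $10^7$, if $d$ were primitive then $d$ would appear in that table and the set of primes in its factorization would coincide with one of the lists in the second column; consequently, that list would be a subset of $\{p_1,\dots,p_r\}$. The second hypothesis of the corollary rules this out, so no divisor of $N$ is primitive, and hence $N$ is complete.

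There is essentially no technical obstacle here: the heavy lifting has already been done, partly in Theorem \ref{th1.15} (the inductive reduction powered by Lemma \ref{lem1.16}) and partly in the finite computer enumeration recorded in Table \ref{tab1}. The only care needed is in parsing the hypothesis: the phrase ``$\{p_1,\dots,p_r\}$ contains the list'' must be read as ``the list is a subset of $\{p_1,\dots,p_r\}$'', because this is exactly the condition under which the corresponding primitive number from Table \ref{tab1} could divide a product of powers drawn from $p_1,\dots,p_r$.
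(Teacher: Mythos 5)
Your proposal is correct and follows essentially the same route as the paper: reduce to completeness of $N=p_1^{\iota_4(p_1)+j_1}\cdots p_r^{\iota_4(p_r)+j_r}$ via Theorem \ref{th1.15}, then observe that any primitive divisor of $N$ would be below $10^7$, hence listed in Table \ref{tab1}, with its prime support contained in $\{p_1,\dots,p_r\}$, contradicting the hypothesis. Your parsing of the ``contains the list'' condition as set containment is exactly the reading the paper intends.
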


\begin{proof}

By Theorem \ref{th1.15}, it is enough to check that $m:=p_1^{\iota_4(p_1)+j_1}\dots p_r^{\iota_4(p_r)+j_r}$ is complete. If not, then it has to be divisible by some primitive number $m'$. Since $m<10^7$, we have that $m'<10^7$ so $m'$ has to be one of the numbers in Table \ref{tab1}. Then the list of primes in the prime decomposition of $m'$ is contained in the list of primes in the prime decomposition of $m$, and this contradicts the hypothesis. Therefore $m$ is complete. 
\end{proof}

\begin{lemma}\label{lem1.13}
The number of non-trivial cycle points for an odd number $m$ not divisible by $3$ is less than 
$$\min_n\left\{2^n \lceil{\frac{m}{3\cdot 4^n} \rceil}\right\}.$$
\end{lemma}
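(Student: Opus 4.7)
The plan is to invoke Lemma~\ref{lemi1.4}, which identifies the set of all extreme cycle points with $X_L\cap\bz$, where $X_L\subset[0,m/3]$ is the attractor of the iterated function system $\sigma_0(x)=x/4$, $\sigma_m(x)=(x+m)/4$. Since $0$ always belongs to $X_L\cap\bz$ (it is the fixed point of $\sigma_0$) and constitutes the trivial cycle, it suffices to establish the non-strict estimate $|X_L\cap\bz|\leq 2^n\lceil m/(3\cdot 4^n)\rceil$ for each $n\geq 0$; subtracting the trivial point then yields the strict inequality in the statement.

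To obtain this estimate, I would iterate the invariance relation $X_L=\sigma_0(X_L)\cup\sigma_m(X_L)$ to decompose
\[
X_L=\bigcup_{(l_1,\dots,l_n)\in\{0,m\}^n}\sigma_{l_1}\circ\cdots\circ\sigma_{l_n}(X_L).
\]
Each of the $2^n$ pieces is contained in the image $\sigma_{l_1}\circ\cdots\circ\sigma_{l_n}([0,m/3])$, which is a closed interval obtained from $[0,m/3]$ by an affine map of contraction ratio $4^{-n}$, and hence has length $m/(3\cdot 4^n)$. Thus $X_L$ is covered by $2^n$ closed intervals of this common length.

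The remaining step is to count integers in each piece. The hypothesis that $m$ is odd and $3\nmid m$ forces $\gcd(3\cdot 4^n,m)=1$, so $m/(3\cdot 4^n)$ is not an integer, and hence any closed interval of length $m/(3\cdot 4^n)$ contains at most $\lfloor m/(3\cdot 4^n)\rfloor+1=\lceil m/(3\cdot 4^n)\rceil$ integers. Summing over the $2^n$ pieces (overcounts at shared endpoints only help the inequality) and taking the minimum over $n$ gives $|X_L\cap\bz|\leq\min_n 2^n\lceil m/(3\cdot 4^n)\rceil$, and removing the trivial cycle point $0$ produces the strict inequality. The main delicate point is the non-integrality of $m/(3\cdot 4^n)$, which is precisely where the hypotheses on $m$ enter and which allows the coarse $\lfloor\cdot\rfloor+1$ bound to be rewritten cleanly as $\lceil\cdot\rceil$.
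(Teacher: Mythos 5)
Your proof is correct and follows essentially the same route as the paper: both reduce to Lemma \ref{lemi1.4}, cover $X_L$ by the $2^n$ affine images of $[0,m/3]$ of length $m/(3\cdot 4^n)$, and use $\gcd(m,3\cdot 4^n)=1$ to count integers per piece. The only (harmless) difference is how strictness is obtained: the paper shows the interval endpoints are non-integers except for $0$ and counts interior integers, while you bound the closed intervals via non-integrality of the length and then subtract the trivial point $0$.
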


\begin{proof}

The phrasing in the statement of the lemma, "number of non-trivial cycle points,"  refers to the total number of points among all non-trivial cycles.

We know from Lemma \ref{lemi1.4} that the cycle points are contained in the intersection of the attractor $X_L$ with $\bz$. Also $X_L\subset[0,\frac m3]$. Therefore 
$$X_L\subset \bigcup_{a_0,a_1,\dots, a_{n-1}\in\{0,m\}}\sigma_{a_{n-1}}\dots\sigma_{a_0}\left[0,\frac m3\right]$$$$=
\bigcup_{a_0,a_1,\dots,a_{n-1}\in\{0,m\}}\left[\frac{a_0+4a_1+\dots 4^{n-1}a_{n-1}}{4^n},\frac{m}{3\cdot 4^n}+\frac{a_0+4a_1+\dots 4^{n-1}a_{n-1}}{4^n}\right].$$

The intervals in this union can be written as 
\beq\label{eq2.12}
\left[ \frac{m \sum_{k=0}^{n-1} l_k 4^k}{ 4^n} , \frac{m\left(1+ 3\sum_{k=0}^{n-1} l_k 4^k \right) }{3\cdot 4^n} \right] .
\eeq
with $l_0,\dots,l_{n-1}\in \{0,1\}$.

Because $m$ is not divisible by $3$ or $4$, the right endpoint is never an integer. Examining the left endpoint, we find
\beq
 \sum_{k=0}^{n-1} l_k 4^k < 4^n ,
\eeq
and thus, since $m$ is odd the left endpoint is an integer only if it is $0$. Since the only cycle containing $0$ is the trivial one, we have that the only non-trivial cycle points for $m$ are the interior points of the above intervals; there are $2^n$
such intervals at each iteration, and each one contains at most $\lceil \frac{m}{3\cdot 4^n} \rceil$ integers in its interior.
\end{proof}

\begin{lemma}\label{lem2.14}
Let $a,b\geq 1$ be odd numbers. Assume that $o_4(ab)>2^{\lceil\log_2\sqrt\frac{a}3\rceil}o_4(b)$. Then $ab$ is not primitive. 
\end{lemma}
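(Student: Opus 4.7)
The plan is to adapt the strategy of Lemma \ref{lem1.16}, trading its modulo-$4$ case analysis for the global interval bound supplied by Lemma \ref{lem1.13}. Suppose, for contradiction, that $ab$ is primitive. Since $2^{\lceil\log_2\sqrt{a/3}\,\rceil} \geq 1$, the hypothesis gives $o_4(ab) > o_4(b)$, so $a > 1$, hence $a \geq 3$; thus $b$ is a proper divisor of $ab$ and so is complete. Moreover, the only primitive multiple of $3$ is $3$ itself (any larger multiple has $3$ as an incomplete proper divisor, by Proposition \ref{pri1.5}), and the hypothesis is not satisfied when $ab = 3$, so I may assume $\gcd(ab, 3) = 1$. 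In particular, Lemma \ref{lem1.13} applies with $m = ab$.

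The next step is to set up the fiber structure of the reduction map and force one cycle point into the upper half of its residue class. By Proposition \ref{pr3.8}, the unique non-trivial extreme cycle equals a single coset $x_0 G_{ab}$ with $x_0 \in U(\bz_{ab})$. The reduction-mod-$b$ map $G_{ab} \to G_b$ is a surjective group homomorphism, so all of its fibers have common size $M := o_4(ab)/o_4(b)$, and the hypothesis gives $M > 2^n$ with $n := \lceil \log_2\sqrt{a/3}\,\rceil$. The induced map on the coset is likewise $M$-to-one. Since $b$ is complete, the contrapositive of Proposition \ref{pr2.5} forces the non-trivial coset $(x_0 \bmod b) G_b$ to contain an element strictly larger than $b/2$; after replacing $x_0$ by a suitable power of $4$ times itself, I may assume $y_0 := x_0 \bmod b > b/2$. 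The $M$ cycle points in the fiber over $y_0$ are then precisely the integers $y_0 + kb \in \{0, 1, \ldots, ab-1\}$ for $M$ distinct values of $k \in \{0, 1, \ldots, a-1\}$.

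Finally, I will combine this with Lemma \ref{lem1.13} applied to $m = ab$: every non-trivial cycle point lies in the interior of one of the $2^n$ closed intervals of length $L := ab/(3 \cdot 4^n)$ used in its proof. The choice of $n$ gives $4^n \geq a/3$, so $L \leq b$; but any two distinct members of the lattice $y_0 + b\Z$ differ by at least $b$, and hence no open interval of length $\leq b$ can contain more than one of them. Therefore the $M$ fiber elements force $M \leq 2^n$, contradicting $M > 2^n$. The main obstacle will be the boundary case $L = b$, which would require $a = 3\cdot 4^n$: for $n \geq 1$ this is incompatible with $a$ being odd, and for $n = 0$ it forces $a = 3$, in which case the single interval $(0, b)$ plainly contains only $y_0$ from the progression (as $y_0 > b/2$ already gives $y_0 + b > b$), so the bound $M \leq 2^0 = 1$ persists and contradicts $M > 1$.
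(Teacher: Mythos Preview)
Your proof is correct and follows essentially the same approach as the paper's own proof: assume $ab$ primitive, invoke Proposition~\ref{pr3.8} for the coset structure, use the $M$-to-one reduction map to $G_b$, and bound the fiber by the $2^n$ intervals from the proof of Lemma~\ref{lem1.13}, each of length at most $b$ and hence holding at most one point of the progression $y_0+b\Z$. The step arranging $y_0>b/2$ via Proposition~\ref{pr2.5} and your separate boundary-case analysis for $L=b$ are both unnecessary here (the open-interval argument already handles $L=b$, and $a=3$ was excluded when you assumed $\gcd(ab,3)=1$), but they do no harm.
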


\begin{proof}
We proceed as in the proof of Lemma \ref{lem1.13}. We take $n=\lceil \log_2\sqrt\frac{a}{3}\rceil$. Then $\frac{ab}{3\cdot 4^n}\leq b$, so the length of the intervals in \eqref{eq2.12} is at most $b$. As we have seen in the proof of Lemma \ref{lem1.13}. the endpoints of these intervals cannot be non-trivial cycle points. If $ab$ is primitive, then it has an extreme cycle $C$ which is a coset $x_0G_{ab}$, by Proposition \ref{pr3.8}.

Now, as in the proof of Lemma \ref{lem1.16}, define the map $h:x_0G_{ab}\rightarrow x_0G_b$, $x_0x\mapsto (x_0x)(\mod b)$. We saw that this is an $M$-to-1 map, with $M>2^n$. Therefore there are $M$ values of $k$ such that $x_0(\mod b)+kb$ is in the cycle $C$. However, the intervals in \eqref{eq2.12} contain at most one such point, since their length is $b$ and the endpoints are not extreme cycle points. We have $2^n<M$ such intervals, and this leads to a contradiction.

\end{proof}

\begin{remark}\label{rem2.15}
The estimate in Lemma \ref{lem2.14} is almost always better than the estimate in Lemma \ref{lem1.16}: we have $2^{\lceil\log_2\sqrt\frac{a}3\rceil}<\frac{2a+15}{12}$ for all odd numbers $a$ except $a=13$ and $a=15$, and for $a=15$, since $a$ is divisible by $3$ we know that $ab$ is not complete and not primitive. Despite this, we include this lemma since the arguments in the proof are different and they might be improved. 
\end{remark}

The next results show that if the order of $4$ in $U(\bz_m)$ is large, then $m$ cannot be primitive. 
\begin{theorem}\label{cor1.14}
Let $m$ be an odd number. Assume the following conditions are satisfied:
\begin{enumerate}
	\item For every proper divisor $d | m$, $d<m$, the number $d$ is complete.
	\item There exists $n\geq 0$ such that 
$$o_4(m)>\min_n\left\{2^n \lceil{\frac{m}{3\cdot 4^n} \rceil}\right\}.$$
\end{enumerate}
Then $m$ is complete. 

If only condition (ii) is satisfied, then $m$ is not primitive. 

Here $\lceil x\rceil $ is the smallest integer larger than or equal to $x$. 
\end{theorem}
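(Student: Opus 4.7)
The plan is to prove the second assertion first and deduce the first from it. I would suppose for contradiction that $m$ is primitive. Proposition \ref{pr3.8} then guarantees that any non-trivial extreme cycle for the digits $\{0,m\}$ is a coset $x_0G_m$ of length exactly $o_4(m)$, so the total number of non-trivial cycle points is at least $o_4(m)$. The next step is to invoke Lemma \ref{lem1.13}, which bounds that same count strictly above by $\min_n\{2^n \lceil m/(3\cdot 4^n)\rceil\}$; combined with hypothesis (ii) this would yield $o_4(m) > \min_n\{2^n \lceil m/(3\cdot 4^n)\rceil\} > \#\{\text{non-trivial cycle points}\} \geq o_4(m)$, a contradiction.

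Before applying Lemma \ref{lem1.13}, I need to verify that $3\nmid m$, which is the standing hypothesis of that lemma. For $m=3$, direct computation gives $o_4(3)=1$ and the minimum equals $1$ (attained at $n=0$), so (ii) already fails and this case is excluded. For $m>3$ with $3\mid m$, the number $3$ would be a proper divisor of the primitive number $m$, and by the definition of primitive it would have to be complete; but $3$ is incomplete, since $\{1\}$ is a non-trivial extreme cycle. Thus $3\nmid m$ and Lemma \ref{lem1.13} applies, completing the proof that $m$ is not primitive.

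For the first assertion, I would assume both (i) and (ii) and argue by contradiction: if $m$ were not complete, then by the observation following Definition \ref{defprim}, $m$ would be divisible by some primitive number $m'$. Hypothesis (i) forces $m'=m$, so $m$ itself would be primitive; but this contradicts the second assertion, which was already established from (ii) alone. Hence $m$ is complete.

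The main conceptual content is the tension between the lower bound on the size of a primitive extreme cycle (coming from the group-theoretic rigidity of Proposition \ref{pr3.8}) and the upper bound from the geometric containment $X_L\cap\bz \subset [0,m/3]$ covered by the level-$n$ IFS intervals (Lemma \ref{lem1.13}). I do not anticipate any deep obstacle; the only care required is the small-case bookkeeping that rules out $3\mid m$, so that Lemma \ref{lem1.13} is legitimately applicable.
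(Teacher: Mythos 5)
Your proposal is correct and follows essentially the same route as the paper: assume $m$ is primitive, use Proposition \ref{pr3.8} to produce an extreme cycle of length $o_4(m)$, and contradict the counting bound of Lemma \ref{lem1.13}; the deduction of completeness from (i) plus non-primitivity is the same. The only difference is that you explicitly verify the hypothesis $3\nmid m$ before invoking Lemma \ref{lem1.13}, a detail the paper's one-line proof leaves implicit.
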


\begin{proof}

If $m$ is primitive, then, by Proposition \ref{pr3.8}, there exists a cycle of length $o_4(m)$. The contradiction follows from Lemma \ref{lem1.13}. 
\end{proof}

\begin{corollary} \label{corRoot}
Let $m$ be an odd number. If
$$o_4(m)>2^{\lceil \log_2{\sqrt{\frac m3}}\rceil},$$
or in particular, if 
$$o_4(m)>\sqrt{\frac{4m}{3}}$$
then $m$ is not primitive. 
\end{corollary}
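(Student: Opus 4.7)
The plan is to apply Theorem \ref{cor1.14}(ii) after making a convenient choice of $n$ inside the minimum. Specifically, set $n_0 := \lceil \log_2 \sqrt{m/3}\rceil$. Then $2^{n_0} \geq \sqrt{m/3}$, so $4^{n_0} \geq m/3$, giving $0 < m/(3\cdot 4^{n_0}) \leq 1$, and therefore $\lceil m/(3\cdot 4^{n_0})\rceil = 1$. Substituting back yields
$$2^{n_0}\lceil m/(3\cdot 4^{n_0})\rceil = 2^{n_0} = 2^{\lceil\log_2\sqrt{m/3}\rceil}.$$
Under the first hypothesis $o_4(m) > 2^{\lceil\log_2\sqrt{m/3}\rceil}$, condition (ii) of Theorem \ref{cor1.14} is immediately satisfied with this particular value $n = n_0$, and the conclusion that $m$ is not primitive follows.

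For the ``in particular'' assertion, one only needs the elementary inequality $\lceil \log_2 x\rceil < \log_2 x + 1$ for positive real $x$, which yields
$$2^{\lceil\log_2\sqrt{m/3}\rceil} < 2^{\log_2\sqrt{m/3}+1} = 2\sqrt{m/3} = \sqrt{4m/3}.$$
Hence the stronger-looking inequality $o_4(m) > \sqrt{4m/3}$ already implies $o_4(m) > 2^{\lceil\log_2\sqrt{m/3}\rceil}$, and the conclusion follows from the first part.

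I do not anticipate any genuine obstacle: both steps are direct calculations once $n_0$ is chosen. The only subtlety concerns the case $3 \mid m$, which is not covered by Lemma \ref{lem1.13}. However, if $3 \mid m$ and $m > 3$, then the primitive divisor $3$ of $m$ already forces $m$ to be non-primitive, since primitivity would require every proper divisor to be complete; and if $m = 3$, then $o_4(3) = 1$ fails the hypothesis $o_4(m) > \sqrt{4m/3} = 2$. Thus in every case the statement holds, and the corollary reduces to the two short computations above.
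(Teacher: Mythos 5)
Your proof is correct and follows essentially the same route as the paper: choose $n=\lceil\log_2\sqrt{m/3}\rceil$ so that the ceiling term equals $1$, observe $2^n\leq\sqrt{4m/3}$, and invoke Theorem \ref{cor1.14}(ii). Your closing remark handling $3\mid m$ (needed because Lemma \ref{lem1.13}, on which Theorem \ref{cor1.14} rests, assumes $3\nmid m$) is a welcome extra precision that the paper passes over silently.
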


\begin{proof}
Let $n=\lceil \log_2{\sqrt{\frac m3}}\rceil$. Then $4^n\geq \frac m3$ so $\lceil \frac{m}{3\cdot 4^n} \rceil =1$. Furthermore, 
\beq
2^n \lceil \frac{m}{3 \cdot 4^n} \rceil = 2^n\leq 2^{\log_2\sqrt{\frac m3}+1}= \sqrt{ \frac{4m}{3} } .
\eeq
The rest follows from Theorem \ref{cor1.14}.

\end{proof}

\begin{corollary}\label{th1.12}
Let $p_1,\dots, p_r$ be distinct simple prime numbers strictly larger than 3. Assume the following conditions are satisfied:
\begin{enumerate}
	\item For any proper subset $F\subset\{1,\dots,r\}$ and any powers $k_i\geq 0$, $i\in F$, the number $\prod_{i\in F}p_i^{k_i}$ is complete. 
	\item None of the numbers $o_4(p_1),\dots, o_4(p_r)$ is divisible by any of the numbers $p_1,\dots,p_r$. 
	\item The following equation is satisfied:
	\begin{equation}
	\lcm(o_4(p_1),\dots, o_4(p_r))> 2^ {\lceil\log_2\sqrt{\frac{p_1 \dots p_r }{3}} \rceil}.
	\label{eq:}
	\end{equation}
\end{enumerate}
Then $p_1^{k_1}\dots p_r^{k_r}$ is complete.

\end{corollary}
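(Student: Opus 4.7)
The plan is to combine Theorem \ref{th1.15} with Corollary \ref{corRoot}, using the hypotheses to reduce the completeness of $m=p_1^{k_1}\cdots p_r^{k_r}$ to that of the squarefree product $p_1\cdots p_r$, and then rule out that this squarefree number could be primitive.

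First I would observe that the cases where some $k_i=0$ are immediate from hypothesis (i), since then $\prod p_i^{k_i}$ is a product over a proper subset of $\{p_1,\dots,p_r\}$. So I may assume $k_i\ge 1$ for all $i$. Next I apply Theorem \ref{th1.15}: to conclude that every such $m$ is complete, it is enough to verify completeness of the single number $N:=p_1^{\iota_4(p_1)+j_1}\cdots p_r^{\iota_4(p_r)+j_r}$, where $j_i$ is the largest power of $p_i$ dividing $\operatorname{lcm}(o_4(p_1),\dots,o_4(p_r))$. Because each $p_i$ is \emph{simple}, $\iota_4(p_i)=1$; because of hypothesis (ii), no $p_i$ divides $\operatorname{lcm}(o_4(p_1),\dots,o_4(p_r))$, so $j_i=0$ for every $i$. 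Hence $N=p_1\cdots p_r$, and the problem reduces to proving that this squarefree product is complete.

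Now I argue by contradiction. If $p_1\cdots p_r$ were not complete, then by Definition \ref{defprim} it would be divisible by a primitive number. But every proper divisor of $p_1\cdots p_r$ is a product $\prod_{i\in F}p_i$ with $F\subsetneq\{1,\dots,r\}$, which is complete by hypothesis (i). So $p_1\cdots p_r$ itself would have to be primitive. I will rule this out using Corollary \ref{corRoot}: it suffices to show
\[
o_4(p_1\cdots p_r)>2^{\lceil \log_2\sqrt{(p_1\cdots p_r)/3}\rceil}.
\]
By Proposition \ref{pr3.9}, $o_4(p_1\cdots p_r)=\operatorname{lcm}(o_4(p_1),\dots,o_4(p_r))$, and hypothesis (iii) is exactly the required inequality. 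This contradicts primitivity, so $p_1\cdots p_r$ is complete, and by the first reduction so is $m$.

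I do not expect a serious obstacle here; the proof is essentially a bookkeeping argument that threads the three hypotheses through Theorem \ref{th1.15}, Proposition \ref{pr3.9} and Corollary \ref{corRoot}. The only point requiring a little care is making sure that the three ingredients (simplicity of the $p_i$, non-divisibility of the $o_4(p_i)$ by any $p_j$, and the size estimate) align so that $N$ reduces to $p_1\cdots p_r$ and hypothesis (i) covers all its proper divisors; once that is observed, the contradiction via Corollary \ref{corRoot} is immediate.
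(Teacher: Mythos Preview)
Your proof is correct, but it takes a different route from the paper. The paper does not invoke Theorem \ref{th1.15}; instead it argues directly: take a minimal counterexample $m=p_1^{k_1}\cdots p_r^{k_r}$ (so that $m$ is primitive with all $k_i\ge 1$), compute
\[
o_4(m)=\lcm\bigl(p_1^{k_1-1}o_4(p_1),\dots,p_r^{k_r-1}o_4(p_r)\bigr)=p_1^{k_1-1}\cdots p_r^{k_r-1}\,\lcm(o_4(p_1),\dots,o_4(p_r))
\]
using simplicity and hypothesis (ii), and then scale the ceiling bound by hand, via $\lceil a\rceil N\ge\lceil aN\rceil$, to push inequality (iii) from $p_1\cdots p_r$ up to $m$ and contradict Theorem \ref{cor1.14}/Corollary \ref{corRoot}.

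Your approach is more modular: you let Theorem \ref{th1.15} absorb the entire passage from arbitrary exponents down to the squarefree case (the scaling argument is hidden inside Lemma \ref{lem1.16}), and then apply (iii) only once, to $p_1\cdots p_r$. This is cleaner, and it makes the roles of the three hypotheses very transparent: simplicity and (ii) force $N=p_1\cdots p_r$ in Theorem \ref{th1.15}; (i) forces primitivity of $N$; (iii) contradicts it. The paper's approach, on the other hand, is more self-contained at the level of this corollary and shows explicitly how the order $o_4$ grows with the exponents. One small redundancy in your write-up: the initial reduction to $k_i\ge 1$ via (i) is unnecessary, since the conclusion of Theorem \ref{th1.15} already covers all $k_i\ge 0$.
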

\begin{proof}
Suppose there exists $k_1,\dots,k_r$ such that $p_1^{k_1}\dots p_r^{k_r}$ is not complete. Then pick $k_1,\dots,k_r$ such that $\sum_{i=1}^rk_i$ is as small as possible, with this property. Clearly, by (i) we can assume all $k_i\geq 1$. Then all the proper divisors of $p_1^{k_1}\dots p_r^{k_r}$ are complete. So $m:=p_1^{k_1}\dots p_r^{k_r}$ is primitive. 
By Propositions \ref{pr3.9} and \ref{pr3.10}, we have
$$o_4(m)=\lcm(o_4(p_1^{k_1},\dots, o_4(p_r^{k_r}))=\lcm(p_1^{k_1-1}o_4(p_1),\dots,p_r^{k_r-1}o_4(p_r))$$$$=p_1^{k_1-1}\dots p_r^{k_r-1}\lcm(o_4(p_1),\dots, o_4(p_r)).$$

From (iii), we get
$$p_1^{k_1-1}\dots p_r^{k_r-1}\lcm(o_4(p_1),\dots, o_4(p_r))>2^n \lceil{\frac{p_1 \dots p_r }{3\cdot 4^n} \rceil} p_1^{k_1-1}\dots p_r^{k_r-1}\geq 2^n \lceil{\frac{p_1^{k_1} \dots p_r^{k_r} }{3\cdot 4^n} \rceil}.$$
(we used the fact that for $a>0$, $N\in\bn$, $\lceil a\rceil N$ is an integer $\geq aN$, so it is bigger than $\lceil aN\rceil$). Since $m$ is primitive, Corollary \ref{corRoot} gives us a contradiction.
\end{proof}

\begin{corollary}\label{cor1.15}
Let $p_1,\dots, p_r$ be distinct simple prime numbers strictly larger than 3. Assume the following conditions are satisfied:
\begin{enumerate}
	\item None of the numbers $o_4(p_1),\dots, o_4(p_r)$ is divisible by any of the numbers $p_1,\dots,p_r$. 
	\item For any subset $\{i_1,\dots, i_s\}$ of $\{1,\dots,r\}$, with $s\geq 2$ the following inequality holds:
	\begin{equation}
	\lcm(o_4(p_{i_1}),\dots, o_4(p_{i_s}))>\sqrt{\frac43 {p_{i_1}\dots p_{i_s}}}.
	\label{eq1.15.1}
	\end{equation}
\end{enumerate}
Then the number $p_1^{k_1}\dots p_r^{k_r}$ is complete for any $k_1\geq0,\dots, k_r\geq0$. 
\end{corollary}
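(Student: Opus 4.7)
The plan is to induct on the number $r$ of distinct prime factors, using Corollary \ref{th1.12} as the engine at each inductive step. The base case $r=1$ is immediate from Theorem \ref{th3.3}: for any prime $p_1>3$ and any $k_1\geq 0$, the number $p_1^{k_1}$ is complete (the case $k_1=0$ gives the trivial number $1$).

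For the inductive step with $r\geq 2$, I would apply Corollary \ref{th1.12} to the primes $p_1,\dots,p_r$. That corollary has three hypotheses that must be checked. Hypothesis (ii) of Corollary \ref{th1.12} is literally hypothesis (i) of the present corollary, so there is nothing to do. Hypothesis (i) of Corollary \ref{th1.12} requires that $\prod_{i\in F}p_i^{k_i}$ be complete for every proper subset $F\subsetneq\{1,\dots,r\}$ and any exponents $k_i\geq 0$. If $|F|\leq 1$, this is handled either trivially or by Theorem \ref{th3.3}. If $|F|\geq 2$, I would invoke the inductive hypothesis applied to the primes indexed by $F$; the hypotheses (i) and (ii) of Corollary \ref{cor1.15} for the restricted family are immediate restrictions of the corresponding hypotheses for the full family (since a subset of $F$ is a subset of $\{1,\dots,r\}$, and every $p_i$ with $i\in F$ is among the $p_1,\dots,p_r$). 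Thus the inductive hypothesis applies.

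It remains to verify hypothesis (iii) of Corollary \ref{th1.12}. This is where hypothesis (ii) of Corollary \ref{cor1.15}, applied with $\{i_1,\dots,i_s\}=\{1,\dots,r\}$, enters:
\begin{equation*}
\lcm(o_4(p_1),\dots,o_4(p_r))>\sqrt{\tfrac{4}{3}p_1\cdots p_r}.
\end{equation*}
The reduction I need is the elementary inequality
\begin{equation*}
\sqrt{\tfrac{4y}{3}}\;>\;2^{\lceil\log_2\sqrt{y/3}\rceil}\qquad\text{for all }y>0,
\end{equation*}
applied with $y=p_1\cdots p_r$. This follows from $\lceil x\rceil<x+1$ (which is strict because $\lceil x\rceil-1<x$), so setting $x=\log_2\sqrt{y/3}$ gives $2^{\lceil x\rceil}<2^{x+1}=2\sqrt{y/3}=\sqrt{4y/3}$. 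Combining the two inequalities yields hypothesis (iii) of Corollary \ref{th1.12}, and that corollary then delivers the completeness of $p_1^{k_1}\cdots p_r^{k_r}$ for any $k_1,\dots,k_r\geq 0$.

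I do not anticipate a serious obstacle here: the bookkeeping for the induction is routine once one notices that hypotheses (i) and (ii) of Corollary \ref{cor1.15} are inherited by every sub-family of primes, and the numerical gap between $\sqrt{4y/3}$ and $2^{\lceil\log_2\sqrt{y/3}\rceil}$ is always strict by the standard ceiling bound. The only mild care needed is to set up the induction so that Corollary \ref{th1.12} can be invoked without circularity, i.e., to make sure that when the inductive hypothesis is used on a proper sub-family, we are genuinely dealing with a smaller number of primes.
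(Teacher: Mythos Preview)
Your proposal is correct and follows essentially the same route as the paper: induction on $r$, base case via Theorem~\ref{th3.3}, and the inductive step by invoking Corollary~\ref{th1.12}. You are simply more explicit than the paper in two places: you spell out that the induction hypothesis must cover proper subsets of all sizes (not just size $r-1$), and you write out the elementary inequality $2^{\lceil\log_2\sqrt{y/3}\rceil}<\sqrt{4y/3}$ that the paper uses tacitly when it passes from $o_4(m)>\sqrt{4m/3}$ to condition~(iii) of Corollary~\ref{th1.12}.
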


\begin{proof}
We proceed by induction on $r$. Theorem \ref{th3.3} shows that we have the result for $r=1$. Assume, the result holds for $r-1$ primes. Then the conditions (i),(ii) in Corollary \ref{th1.12} are satisfied and we check condition (iii). Let $m:=p_1\dots p_r$. 

 We have:
\beq
\sqrt{ \frac{4m}{3} } <o_4 (m) .
\eeq
Thus condition (iii) is satisfied and Corollary \ref{th1.12} gives us the result. 
\end{proof}

\begin{corollary}\label{cor2.15}
Let $p_1,\dots, p_r$ be distinct simple prime numbers strictly larger than 3. Assume the following conditions are satisfied:
\begin{enumerate}
	\item The numbers $o_4(p_1),\dots,o_4(p_r),p_1,\dots,p_r$ are mutually prime.
	\item $o_4 (p_j) > \sqrt{\sqrt{\frac43}p_j}$ for all $j$.
\end{enumerate}
Then the number $p_1^{k_1}\dots p_r^{k_r}$ is complete for any $k_1\geq0,\dots, k_r\geq0$. 
\end{corollary}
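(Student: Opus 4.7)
The plan is to derive this corollary from Corollary \ref{cor1.15} by verifying its two hypotheses from the stronger conditions (i) and (ii) assumed here. Since Corollary \ref{cor1.15} already delivers completeness for all products of powers of simple primes $p_1,\dots,p_r>3$ once its hypotheses hold, the desired conclusion will follow immediately.

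First I would check hypothesis (i) of Corollary \ref{cor1.15}: the mutual primality of the entire list $o_4(p_1),\dots,o_4(p_r),p_1,\dots,p_r$ asserted in (i) implies in particular that no $o_4(p_j)$ is divisible by any $p_k$. The same mutual primality also ensures that the numbers $o_4(p_{i_1}),\dots,o_4(p_{i_s})$ are pairwise coprime for every subset $\{i_1,\dots,i_s\}\subseteq\{1,\dots,r\}$, so their least common multiple equals their product:
\begin{equation*}
\lcm(o_4(p_{i_1}),\dots,o_4(p_{i_s})) = o_4(p_{i_1})\cdots o_4(p_{i_s}).
\end{equation*}

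Next, I would use hypothesis (ii). Squaring the inequality $o_4(p_j)>\sqrt{\sqrt{4/3}\,p_j}$ yields $o_4(p_j)^2 > \sqrt{4/3}\, p_j$ for each $j$. Multiplying this bound across $j=i_1,\dots,i_s$ and taking a square root gives
\begin{equation*}
o_4(p_{i_1})\cdots o_4(p_{i_s}) > \left(\frac{4}{3}\right)^{s/4}\sqrt{p_{i_1}\cdots p_{i_s}}.
\end{equation*}
For $s\geq 2$ the exponent satisfies $s/4\geq 1/2$, so the right-hand side is at least $\sqrt{(4/3)\,p_{i_1}\cdots p_{i_s}}$, which combined with the lcm-product identity above is precisely the inequality \eqref{eq1.15.1} required by hypothesis (ii) of Corollary \ref{cor1.15}. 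Applying that corollary then yields completeness of $p_1^{k_1}\cdots p_r^{k_r}$ for any choice of exponents.

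There is no serious obstacle here; the argument is essentially arithmetic bookkeeping verifying that the per-prime bound in (ii) aggregates correctly to the subset bound of Corollary \ref{cor1.15}. The only point that warrants attention is tracking the exponents of $4/3$ to see why the condition $s\geq 2$ suffices — which is exactly the range of $s$ appearing in Corollary \ref{cor1.15}, so the hypothesis (ii) there is calibrated precisely for this reduction to go through.
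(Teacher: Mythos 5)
Your proof is correct and follows essentially the same route as the paper: both reduce to Corollary \ref{cor1.15} by using the pairwise coprimality from (i) to replace the lcm with the product, and then aggregate the per-prime bound in (ii) into the subset inequality \eqref{eq1.15.1} via the observation that $\left(\frac43\right)^{s/4}\geq\left(\frac43\right)^{1/2}$ for $s\geq 2$. The only cosmetic difference is that you square and then take a square root where the paper multiplies the inequalities directly; the computation is identical.
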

\begin{proof}
We use Corollary \ref{cor1.15}. For any subset $\{i_1,\dots,i_s\}$ of $\{1,\dots,r\}$ with $s\geq 2$ we have 
\beq
o_4 (p_{i_1}) \dots o_4 (p_{i_s}) > \sqrt{\sqrt{\frac43}p_{i_1}} \dots  \sqrt{\sqrt{\frac43}p_{i_s}} \geq \sqrt{\frac43 p_{i_1} \dots p_{i_s} } .
\eeq

\end{proof}

%
%
%
%
%

\begin{corollary}\label{cor1.24}
Let $a$ be a complete odd number. Let $p>3$ be a simple prime number. Assume that 
\begin{enumerate}
	\item $p$ does not divide $a$;
	\item $o_4(p)$ and $o_4(a)$ are mutually prime;
	\item $o_4(p)>2^{\lceil\log_2\sqrt\frac{p}{3}\rceil}$ (in particular if $o_4(p)=\frac{p-1}2$, $p>5$).

\end{enumerate}
Then $p^ka$ is complete for all $k\geq 0$.
\end{corollary}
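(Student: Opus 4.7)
The plan is to prove by strong induction on the integer $N=p^k d$ that $N$ is complete for every $k\ge 0$ and every divisor $d\mid a$; the statement of the corollary is the case $d=a$. The base case $k=0$ reduces to the fact that every divisor $d$ of the complete number $a$ is itself complete, which is the contrapositive of Proposition \ref{pri1.5}. For the inductive step, every proper divisor of $p^k d$ has the form $p^{k'}d'$ with $d'\mid d\mid a$ and $p^{k'}d'<p^k d$, hence is complete by the inductive hypothesis; so $p^k d$ is either complete or primitive, and it remains to rule out primitivity.

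The preliminary observation I would use throughout is that $d\mid a$ gives $o_4(d)\mid o_4(a)$, so by hypothesis (ii), $\gcd(o_4(p),o_4(d))=1$. Combined with the simple-prime formula $o_4(p^j)=p^{j-1}o_4(p)$ for $j\ge 1$ (Proposition \ref{pr3.10}) and Proposition \ref{pr3.9}, this lets me express $o_4(p^j d)$ explicitly in terms of $j$ and $\beta:=v_p(o_4(d))$.

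For $k=1$ I would apply Lemma \ref{lem2.14} with $A=p$ and $B=d$: coprimality gives $o_4(pd)=o_4(p)\,o_4(d)$, so the ratio $o_4(pd)/o_4(d)$ equals $o_4(p)$, which by hypothesis (iii) exceeds $2^{\lceil\log_2\sqrt{p/3}\,\rceil}$; Lemma \ref{lem2.14} then forces $pd$ not primitive. For $k\ge 2$ I would apply Lemma \ref{lem1.16} with $A=p$ and $B=p^{k-1}d$. A direct lcm computation gives
\[
\frac{o_4(p^k d)}{o_4(p^{k-1}d)}=p^{\max(k-1,\beta)-\max(k-2,\beta)},
\]
which equals $p$ whenever $\beta\le k-2$; since $p>3$ yields $p\ge (2p+15)/12$, Lemma \ref{lem1.16} gives $p^k d$ not primitive in that case.

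The main obstacle is the edge case $\beta\ge k-1$, in which the ratio above collapses to $1$ and Lemma \ref{lem1.16} fails. In this case the divisibility $p^{k-1}\mid o_4(d)$ gives the lower bound $o_4(d)\ge p^{k-1}$, so that $o_4(p^k d)=o_4(p)\,o_4(d)$ is itself quite large; I would finish by applying Corollary \ref{corRoot} directly to $p^k d$, verifying the inequality $o_4(p^k d)>2^{\lceil\log_2\sqrt{p^k d/3}\,\rceil}$ by combining hypothesis (iii), the lower bound on $o_4(d)$, and the rounding behaviour of the ceiling in the exponent on the right-hand side.
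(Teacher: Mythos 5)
Your induction framework is sound and in fact more careful than the paper's own write-up: the reduction to divisors $p^{k'}d'$, the base case via Proposition \ref{pri1.5}, the case $k=1$ via Lemma \ref{lem2.14} (coprimality gives $o_4(pd)=o_4(p)o_4(d)$, and hypothesis (iii) is exactly what the lemma needs), and the case $\beta\le k-2$ via Lemma \ref{lem1.16} (ratio $p\ge\frac{2p+15}{12}$ for $p>3$) are all correct. The genuine gap is your edge case $\beta\ge k-1$: the inequality you propose to verify there is false in general. Concretely, take $p=7$, $a=d=43$, $k=2$. All hypotheses of the corollary hold: $7$ is a simple prime, $7\nmid 43$, $o_4(7)=3$ and $o_4(43)=7$ are coprime, and $o_4(7)=3>2^{\lceil\log_2\sqrt{7/3}\rceil}=2$. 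You are in the edge case, since $\beta=1=k-1$. But then $o_4(7^2\cdot 43)=\lcm(21,7)=21$, while $2^{\lceil\log_2\sqrt{7^2\cdot 43/3}\rceil}=2^5=32$, so Corollary \ref{corRoot} does not apply; the sharper minimum in Theorem \ref{cor1.14} also equals $32$ here (attained at $n=5$), so that route fails as well. The underlying problem is that your lower bound $o_4(d)\ge p^{k-1}$ controls $o_4(d)$ against $p$ but not against $d$: one can have $o_4(d)=p^{k-1}$ exactly while $d$ is as large as a divisor of $4^{o_4(d)}-1$ can be (here $43\mid 4^7-1$), and then $o_4(p)o_4(d)$ falls far below $\sqrt{p^kd/3}$.

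You have, however, put your finger on a real defect: your edge case is precisely where the paper's own proof silently breaks down. The paper asserts $o_4(p^ka)=p^{k-1}o_4(p)o_4(a)$, which requires $p\nmid o_4(a)$, and this is not guaranteed by hypothesis (ii), since $o_4(p)\mid p-1$ is automatically prime to $p$; in the example above the correct value is $o_4(7^2\cdot 43)=21$, not $7\cdot 3\cdot 7=147$. So under the additional hypothesis $p\nmid o_4(a)$ (which forces $\beta=0$ for every divisor $d$ of $a$), your argument is complete and essentially reproduces the paper's; without it, neither your proposal nor the paper's proof covers the case $p\mid o_4(a)$, $k\ge 2$, and a genuinely different idea is needed there. (The number $7^2\cdot 43=2107$ is in fact complete, being divisible by no primitive number in Table \ref{tab1}, but none of Lemma \ref{lem1.16}, Lemma \ref{lem2.14}, or Theorem \ref{cor1.14} certifies this.)
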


\begin{proof}
Since $p$ does not divide $a$, $p^k$ is prime with $a$. With Propositions \ref{pr3.9}, \ref{pr3.10} we have
$$o_4(p^ka)=p^{k-1}o_4(p)o_4(b).$$
Also we have, for $k\geq 2$, since $p\geq 5$,
$$\lceil\log_2\sqrt\frac{p}{3}\rceil+\log_2p^{k-1}\geq \lceil\log_2\sqrt\frac{p}{3}\rceil+\log_2\sqrt{p^{k-1}}+1\geq \lceil\log_2\sqrt\frac{p}{3}\rceil+\lceil\log_2\sqrt{p^{k-1}}\rceil$$
$$\geq\lceil\log_2\sqrt\frac{p}{3}+\log_2\sqrt{p^{k-1}}\rceil=\lceil\log_2\sqrt\frac{p^k}{3}\rceil.$$

Therefore, 
$$p^{k-1}o_4(p)>2^{\lceil\log_2\sqrt\frac{p^k}{3}\rceil},$$
for $k\geq 2$ and also, from the hypothesis , for $k=1$. By Lemma \ref{lem2.14}, $p^ka$ cannot be primitive, for $k\geq 1$ and, because $a$ is complete and $p$ is prime, this means that $p^ka$ is complete. 

Note that $\frac{p-1}2\geq 2^{\lceil\log_2\sqrt\frac{p}{3}\rceil}$ for $p>5$, so this is indeed a peculiar case. 

\end{proof}

\begin{corollary}
Let $m$ be an odd number. If the index $x$ of $G_m$ in $U(\bz_m)$ satisfies $\frac{\phi(m)}{\sqrt{\frac43m}} > x$, where $\phi$ is Euler's totient function, then $m$ is not primitive.
\end{corollary}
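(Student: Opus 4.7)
The plan is to recognize this corollary as a direct reformulation of Corollary \ref{corRoot} in terms of the index of $G_m$ in $U(\bz_m)$, rather than in terms of $o_4(m)$ directly. By Lagrange's theorem applied to the subgroup $G_m \leq U(\bz_m)$, we have the identity
\[
x = [U(\bz_m) : G_m] = \frac{|U(\bz_m)|}{|G_m|} = \frac{\phi(m)}{o_4(m)},
\]
since $|U(\bz_m)| = \phi(m)$ and $|G_m| = o_4(m)$ by Definition \ref{defgm}.

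The first step is to rewrite the hypothesis $\frac{\phi(m)}{\sqrt{4m/3}} > x$ using this identity. Substituting $x = \phi(m)/o_4(m)$ and dividing both sides by $\phi(m) > 0$ yields
\[
\frac{1}{\sqrt{4m/3}} > \frac{1}{o_4(m)},
\]
which rearranges to $o_4(m) > \sqrt{4m/3}$.

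The second step is to invoke Corollary \ref{corRoot}, which tells us precisely that $o_4(m) > \sqrt{4m/3}$ implies that $m$ is not primitive. This completes the argument. There is no genuine obstacle here, since the content of the statement is entirely in the translation between the multiplicative language of $U(\bz_m)$ and the order language of $o_4(m)$; once this translation is made, the conclusion is immediate from the earlier corollary.
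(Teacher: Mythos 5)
Your proposal is correct and follows essentially the same route as the paper: both rewrite the index as $x=\phi(m)/o_4(m)$, deduce $o_4(m)>\sqrt{4m/3}$ from the hypothesis, and conclude by Corollary \ref{corRoot}. No gaps.
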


\begin{proof}
We have $o_4(m) = |G_m|$ and $\phi(m) = |U(\bz_m)|$. Thus, from
$$
o_4 (m)=\frac{|U(\bz_m)|}{x}=\frac{\phi(m)}{x} > \sqrt{\frac43m}.
$$
The result follows from Corollary \ref{corRoot}.
\end{proof}

\section{Examples}

\begin{table}[ht]
{
\begin{tabular}{|c|c|c|c|c|c|c|c|c|c|c|c|c|c|}
\hline
$p$& $o_4(p)$&$p$& $o_4(p)$&$p$& $o_4(p)$&$p$& $o_4(p)$&$p$& $o_4(p)$&$p$& $o_4(p)$&$p$&$o_4(p)$\\
\hline
3&	1&	103&	51&	239&	119&	389&	194&	557&	278&	709&	354&	881&	55\\
\hline
5&	2&	107&	53&	241&	12&	397&	22&	563&	281&	719&	359&	883&	441\\
\hline
7&	3&	109&	18&	251&	25&	401&	100&	569&	142&	727&	121&	887&	443\\
\hline
11&	5&	113&	14&	257&	8&	409&	102&	571&	57&	733&	122&	907&	453\\
\hline
13&	6&	127&	7&	263&	131&	419&	209&	577&	72&	739&	123&	911&	91\\
\hline
17&	4&	131&	65&	269&	134&	421&	210&	587&	293&	743&	371&	919&	153\\
\hline
19&	9&	137&	34&	271&	135&	431&	43&	593&	74&	751&	375&	929&	232\\
\hline
23&	11&	139&	69&	277&	46&	433&	36&	599&	299&	757&	378&	937&	117\\
\hline
29&	14&	149&	74&	281&	35&	439&	73&	601&	25&	761&	190&	941&	470\\
\hline
31&	5&	151&	15&	283&	47&	443&	221&	607&	303&	769&	192&	947&	473\\
\hline
37&	18&	157&	26&	293&	146&	449&	112&	613&	306&	773&	386&	953&	34\\
\hline
41&	10&	163&	81&	307&	51&	457&	38&	617&	77&	787&	393&	967&	483\\
\hline
43&	7&	167&	83&	311&	155&	461&	230&	619&	309&	797&	398&	971&	97\\
\hline
47&	23&	173&	86&	313&	78&	463&	231&	631&	45&	809&	202&	977&	244\\
\hline
53&	26&	179&	89&	317&	158&	467&	233&	641&	32&	811&	135&	983&	491\\
\hline
59&	29&	181&	90&	331&	15&	479&	239&	643&	107&	821&	410&	991&	495\\
\hline
61&	30&	191&	95&	337&	21&	487&	243&	647&	323&	823&	411&	997&	166\\
\hline
67&	33&	193&	48&	347&	173&	491&	245&	653&	326&	827&	413&	1009&	252\\
\hline
71&	35&	197&	98&	349&	174&	499&	83&	659&	329&	829&	414&	1013&	46\\
\hline
73&	9&	199&	99&	353&	44&	503&	251&	661&	330&	839&	419&	1019&	509\\
\hline
79&	39&	211&	105&	359&	179&	509&	254&	673&	24&	853&	426&	1021&	170\\
\hline
83&	41&	223&	37&	367&	183&	521&	130&	677&	338&	857&	214&	1031&	515\\
\hline
89&	11&	227&	113&	373&	186&	523&	261&	683&	11&	859&	429&	1033&	129\\
\hline
97&	24&	229&	38&	379&	189&	541&	270&	691&	115&	863&	431&	1039&	519\\
\hline
101&	50&	233&	29&	383&	191&	547&	273&	701&	350&	877&	438&	1049&	131\\
\hline
\end{tabular}
}
\medskip

\caption{The primes less than $1049$ and their $o_4$}
\label{tab2}
\end{table}

\begin{example}\label{ex3.1}
We want to prove that $5^k\cdot 7^l$ is complete for any $k,l$. We have $o_4(5)=2$, $o_4(7)=3$ so 
$$\lcm(o_4(5),o_4(7))=6>2^{\lceil\log_2\sqrt{\frac {35}{3}}\rceil}=4.$$
Since $5$ and $7$ are simple primes, the result follows immediately from Corollary \ref{th1.12}.

\end{example}

\begin{example}
 Let us prove that $5^k\cdot 19^l$ is complete for any $k$, $l$. We have that $5^k$ is complete and $o_4(19)=9=\frac{19-1}{2}$ is prime with $o_4(5)=2$. So Corollary \ref{cor1.24} applies. The same argument applies to show that  $7^k\cdot11^l$, $5^k\cdot7^l\cdot23^m$ are complete. We can use this argument also for $7^k\cdot11^l\cdot 17^m$, but we have to start with $17^k$, since $o_4(17)=4$. Then $17^m\cdot 7^k$ is complete and $17^m\cdot 7^k\cdot 11^l$ is complete.  
\end{example}

\begin{example}\label{ex3.2}
Let us check that $5^k11^l$ is complete for any $k,l$. We have $o_4(5)=2$, $o_4(11)=5$. We have a small problem since $o_4(11)$ is divisible by 5, which is one of the primes. In Theorem \ref{th1.15} or Corollary \ref{cor1.12}, we have $\iota_4(5)=1$, $\iota_4(11)=1$, $\lcm(o_4(5),o_4(11))=10$, so $j_1$, the largest power of $5$ that divides the lcm 10, is 1, and $j_2=0$. So we have to check that $5^2\cdot11$ is complete, or that it does not contain any of the lists in the second column of Table \ref{tab1}. And that is clear. 

We could also try to use Theorem \ref{cor1.14} or Corollary \ref{corRoot}. For that, since we know that $5$ and $11$ are complete (because they are prime), we have to check that $5\cdot 11$ and $5^2\cdot 11$ are not primitive. We can use Corollary \ref{corRoot} to check that $5\cdot 11$ is complete 
$$o_4(5\cdot 11)=\lcm(o_4(5),o_4(11))=10>2^{\lceil\log_2\sqrt{\frac{5\cdot 11}{3}}\rceil}=8.$$
However, we cannot use this for $5^2 11$, because
$$o_4(5^2\cdot11)=10<2^{\lceil\log_2\sqrt{\frac{5^2 11}{3}}\rceil}=16.$$
The minimum in Theorem \ref{cor1.14} gives the same value, 16. 
\end{example}

Looking at Table \ref{tab1}, we formulate the following conjecture:
\begin{conjecture}\label{con1}
Let $m$ be a primitive number. Then
\begin{enumerate}
	\item $m$ is square-free.
	\item If $m=p_1\dots p_r$ is the prime decomposition of $m$, then there exists $i$ such that 
	$$\lcm(o_4(p_1),\dots,o_4(p_r))=o_4(p_i).$$
\end{enumerate}
\end{conjecture}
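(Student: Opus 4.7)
For part (i), the plan is to assume $m$ is primitive with $p^2 \mid m$ for some prime $p$, and derive a contradiction. Write $m = p \cdot b$, so $b$ is a proper divisor of $m$ and hence complete. The key is to control the ratio $o_4(m)/o_4(b)$. When $p$ is simple and the exponent of $p$ in $m$ exceeds $\iota_4(p) + j_p$ (with $j_p$ as in Proposition \ref{pr3.14}), that proposition forces $o_4(m)/o_4(b) = p$. Since $p \geq (2p+15)/12$ for every odd prime $p \geq 3$, Lemma \ref{lem1.16} with $a = p$ then yields the contradiction.

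The main obstacle for (i) is the non-simple case, where $\iota_4(p) \geq 2$ (the first instance being $p = 1093$, as noted in the excerpt). Here $v_p(m) = 2$ need not increase $o_4(m)$ at all relative to $o_4(m/p)$, so the single-step Lemma \ref{lem1.16} estimate is useless. I would attack this by combining Lemma \ref{lem2.14} with a direct structural analysis of the extreme cycle via Proposition \ref{pr3.8}: the cycle is a coset $x_0 G_m$ of $G_m$ in $U(\bz_m)$, so one can consider its image modulo $p$ and try to show that the fiber over each residue class is too numerous to fit inside $[0, m/3]$ while respecting the congruence constraints of Lemma \ref{lem2.3}.

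For part (ii), assume by (i) that $m = p_1 \cdots p_r$ is square-free, and set $n_i = o_4(p_i)$, $L = \lcm(n_1,\ldots,n_r)$. Assume for contradiction that $n_i \neq L$ for every $i$. For each index $i$, let $r_i := L/\lcm_{j\neq i} n_j$, and note $r_i \geq 1$. The first attempt is to find some $i$ with $r_i \geq (2p_i + 15)/12$ and then apply Lemma \ref{lem1.16} with $a = p_i$, $b = m/p_i$ to conclude $m$ is not primitive. If no single-prime removal works, I would escalate to removing a subset: for $S \subset \{1,\ldots,r\}$ let $a_S = \prod_{i\in S} p_i$, $b_S = m/a_S$, and seek $S$ for which $o_4(m)/o_4(b_S) > 2^{\lceil \log_2 \sqrt{a_S/3}\,\rceil}$, so that Lemma \ref{lem2.14} applies.

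The main obstacle for (ii) is the ``redundant'' configurations in which the $n_i$ are pairwise incomparable in the divisibility lattice but the lcm of any $r-1$ of them still equals $L$ (the classic toy example being $\{6,10,15\}$). For such configurations $r_i = 1$ for every $i$, so single-prime removals never help, and one is forced to work with $|S| \geq 2$. Showing that \emph{some} subset $S$ yields a sufficient $o_4$-jump relative to $a_S$ appears to require a genuine combinatorial argument about the divisibility poset of the $n_i$ combined with the density bound $o_4(m) \leq \sqrt{4m/3}$ coming from Corollary \ref{corRoot}. I expect this combinatorial step, rather than any of the analytic inequalities, to be the principal difficulty of the conjecture.
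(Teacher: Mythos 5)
This statement is labelled a \emph{conjecture} in the paper: the authors offer no proof, only the numerical evidence of Table \ref{tab1}. So there is nothing to compare your argument against; it can only be judged on its own terms, and on those terms it is a plan of attack rather than a proof --- you say as much yourself at the two places where the real difficulty lives. For part (i), the step you actually carry out (Proposition \ref{pr3.14} plus Lemma \ref{lem1.16} with $a=p$) only covers the case $v_p(m)>\iota_4(p)+j_p$. The uncovered region is not just the non-simple primes: for a simple prime $p$ with $j_p\geq 1$, the exponent $v_p(m)=2$ satisfies $2\leq \iota_4(p)+j_p$ and the order does not jump at all. The paper's own Example \ref{ex3.2} exhibits exactly this: for $m=5^2\cdot 11$ one has $o_4(5^2\cdot 11)=o_4(5\cdot 11)=10$, every factorization $m=ab$ fails the hypotheses of both Lemma \ref{lem1.16} and Lemma \ref{lem2.14}, and the authors can only dispose of $275$ by the computer search behind Table \ref{tab1}. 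Your proposed fallback (analyzing the fibers of $x_0G_m\to x_0G_{m/p}$ against the constraints of Lemma \ref{lem2.3}) is precisely the mechanism inside Lemma \ref{lem1.16}, and in this case that mechanism gives $M=1$, i.e.\ no information; you would need a genuinely new idea, not a sharper count.

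For part (ii) the gap is of the same nature but worse. The contradiction hypothesis ``$o_4(p_i)\neq L$ for all $i$'' carries no quantitative content: it does not bound any ratio $o_4(m)/o_4(b_S)$ from below. In the redundant configurations you correctly identify (the $\{6,10,15\}$ pattern), every single-prime removal has ratio $r_i=1$, and for $|S|\geq 2$ the threshold in Lemma \ref{lem2.14} grows like $\sqrt{4a_S/3}\geq\sqrt{4p_{i_1}p_{i_2}/3}$, while the available ratio $o_4(m)/o_4(b_S)$ is still controlled only by the divisibility structure of the $n_i$ and can remain small. Nothing in the proposal shows that some subset $S$ must clear its threshold, and the global bound $o_4(m)\leq\sqrt{4m/3}$ from Corollary \ref{corRoot} (which holds for any primitive $m$) points in the wrong direction --- it caps how large the orders can be rather than forcing a usable jump. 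In short: the easy halves of both parts are correctly reduced to Lemmas \ref{lem1.16} and \ref{lem2.14}, but the cases you flag as ``obstacles'' are exactly the content of the conjecture, and they remain open here as they do in the paper.
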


A weaker conjecture is the following: 

\begin{conjecture}\label{con2}
Let $m$ be an odd number not divisible by 3 and let $m=p_1^{k_1}\dots p_r^{k_r}$ be its prime decomposition. If the numbers $o_4(p_1),\dots,o_4(p_r),p_1,\dots,p_r$ are mutually prime then $m$ is complete. 

\end{conjecture}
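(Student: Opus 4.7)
The approach is strong induction on the number $r$ of distinct prime factors of $m$, using the Chinese Remainder Theorem (CRT) decomposition made available by the coprimality hypothesis. The base case $r=1$ is Theorem \ref{th3.3}. For the inductive step, suppose the conjecture fails. Since the hypothesis is inherited by divisors, and Proposition \ref{pri1.5} (in contrapositive form) shows complete numbers are closed under division, I may pick a counterexample $m=p_1^{k_1}\cdots p_r^{k_r}$ that is minimal under divisibility; such an $m$ is primitive in the sense of Definition \ref{defprim}, and by Theorem \ref{th3.3} necessarily $r\ge 2$.

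The structural backbone is as follows. The coprimality of $o_4(p_1),\dots,o_4(p_r),p_1,\dots,p_r$ propagates, via Propositions \ref{pr3.9}--\ref{pr3.14}, to pairwise coprimality of the full orders $o_4(p_1^{k_1}),\dots,o_4(p_r^{k_r})$, so that $o_4(m)=\prod_{i=1}^r o_4(p_i^{k_i})$. Under the CRT isomorphism $U(\bz_m)\cong\prod_i U(\bz_{p_i^{k_i}})$ the cyclic group $G_m$ (generated by the image of $4$) has order equal to that of the full product $\prod_i G_{p_i^{k_i}}$, so $G_m$ coincides with that product. By Proposition \ref{pr3.8}, the extreme cycle witnessing primitivity of $m$ is a coset $C=x_0G_m$, which under CRT splits as $C=\prod_i C_i$ with $C_i=a_iG_{p_i^{k_i}}\subset U(\bz_{p_i^{k_i}})$. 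Since each $p_i^{k_i}$ is complete by Theorem \ref{th3.3}, the contrapositive of Proposition \ref{pr2.5} forces each $C_i$ to contain an element $b_i^{*}\in[p_i^{k_i}/2,\,p_i^{k_i})$.

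The heart of the argument is to upgrade these local largeness facts into a global contradiction by producing an element of $C$ whose lift to $[0,m)$ exceeds $m/3$, violating Lemma \ref{lemi1.4}. The optimistic hope that the CRT lift of $(b_1^{*},\dots,b_r^{*})$ is automatically larger than $m/3$ is false in general: small cases (for example the tuple $(4,4)\in\bz_5\times\bz_7$ lifting to $4\in\bz_{35}$) show that coordinate-wise large residues can combine into a small integer. A more robust route is a discrepancy argument via exponential sums. Writing
\[
\#\{x\in C:0\le x\le m/3\}=\frac{1}{m}\sum_{\xi\in\bz_m}\widehat{\mathbf{1}}_{[0,m/3]}(\xi)\,S_C(\xi),\qquad S_C(\xi):=\sum_{x\in C}e^{2\pi i\xi x/m},
\]
the assumption $C\subset[0,m/3]$ forces this count to equal $|C|$, while the $\xi=0$ term contributes only $|C|/3+O(1)$. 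The product structure $C=\prod_i C_i$ factors $S_C(\xi)$ as $\prod_i S_{C_i}(\xi_i)$ over the CRT components; sufficient cancellation in any single factor would suffice to close the argument.

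The chief obstacle is obtaining effective cancellation in these coset character sums uniformly across the whole hypothesis class. Standard subgroup character sum bounds (Weil, Burgess, Heath-Brown--Konyagin) are strong only when the subgroup has small index in the ambient unit group, whereas here $[U(\bz_{p_i^{k_i}}):G_{p_i^{k_i}}]$ can be arbitrarily large --- as small as $o_4(p_i)\sim\log_2 p_i$ for Fermat primes $p_i=2^n+1$, which the hypothesis does permit. A plausible workaround is to strengthen the pigeonhole step underlying Lemmas \ref{lem1.16} and \ref{lem2.14} by exploiting several CRT fibers at once: the factor $(2a+15)/12$ in Lemma \ref{lem1.16} arose from a two-out-of-four residue classification modulo $4$, and one can hope that intersecting the residue restrictions modulo the various $p_i^{k_i}$ yields a much sharper count of admissible lifts, producing a refinement of those lemmas effective even when each individual $o_4(p_i)$ is small. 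Quantifying this multi-fiber pigeonhole is the step I expect to be decisive, and it is, I believe, the reason the conjecture has so far resisted a proof.
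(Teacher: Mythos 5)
The statement you are trying to prove is stated in the paper as Conjecture \ref{con2}; the paper offers no proof of it, only the observation that Conjecture \ref{con1} would imply it. Your write-up does not close the gap either, and you say so yourself: the final paragraph concedes that the ``decisive'' quantitative step is not carried out. To be concrete about where the argument stops: the structural reductions are fine --- minimal counterexample, primitivity, the CRT splitting $G_m\cong\prod_i G_{p_i^{k_i}}$ (valid because the hypothesis makes the orders $o_4(p_i^{k_i})$ pairwise coprime via Propositions \ref{pr3.9}--\ref{pr3.14}), the coset structure of the cycle from Proposition \ref{pr3.8}, and the extraction of a large element $b_i^\ast\ge p_i^{k_i}/2$ in each fiber from the contrapositive of Proposition \ref{pr2.5}. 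But the conclusion you need is that the coset $C=\prod_i C_i$ cannot sit inside $[0,m/3]$, and neither of your two proposed mechanisms delivers it. The exponential-sum route requires nontrivial uniform cancellation in $S_{C_i}(\xi_i)=\sum_{x\in C_i}e^{2\pi i\xi_i x/p_i^{k_i}}$ for multiplicative cosets of size as small as $o_4(p_i)$, which the hypothesis allows to be of order $\log p_i$ (Fermat primes such as $257$ and $65537$, which the hypothesis does not exclude and which actually occur in Table \ref{tab1}); no known character-sum bound gives cancellation for subgroups that small, so this step is not a routine verification but the entire difficulty. The ``multi-fiber pigeonhole'' alternative is named but never formulated as a statement, let alone proved.

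It is worth noting that the paper's own unconditional results in this direction (Lemmas \ref{lem1.16} and \ref{lem2.14}, Corollaries \ref{corRoot}, \ref{cor1.15} and \ref{cor2.15}) all require an explicit largeness hypothesis on the orders --- e.g.\ $o_4(p_j)>\sqrt{\sqrt{4/3}\,p_j}$ in Corollary \ref{cor2.15} --- precisely to stay out of the small-subgroup regime where your argument breaks down. Your plan is a reasonable program and correctly identifies the obstruction, but as a proof of Conjecture \ref{con2} it has a genuine, and in fact acknowledged, gap at the key quantitative step.
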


It is easy to see that Conjecture \ref{con1} implies Conjecture \ref{con2}, for if $m$ is not complete, then it is divisible by some primitive number, and by Conjecture \ref{con1}, the orders cannot be mutually prime.

 \begin{acknowledgements}
This work was partially supported by a grant from the Simons Foundation (\#228539 to Dorin Dutkay).
\end{acknowledgements}


\begin{thebibliography}{DHL13}

\bibitem[DHL13]{MR3055992}
Xin-Rong Dai, Xing-Gang He, and Chun-Kit Lai.
\newblock Spectral property of {C}antor measures with consecutive digits.
\newblock {\em Adv. Math.}, 242:187--208, 2013.

\bibitem[DHS09]{DHS09}
Dorin~Ervin Dutkay, Deguang Han, and Qiyu Sun.
\newblock On the spectra of a {C}antor measure.
\newblock {\em Adv. Math.}, 221(1):251--276, 2009.

\bibitem[DJ06]{DJ06}
Dorin~Ervin Dutkay and Palle E.~T. Jorgensen.
\newblock Iterated function systems, {R}uelle operators, and invariant
  projective measures.
\newblock {\em Math. Comp.}, 75(256):1931--1970 (electronic), 2006.

\bibitem[DJ09]{DJ10}
Dorin~Ervin Dutkay and Palle E.~T. Jorgensen.
\newblock Fourier duality for fractal measures with affine scales.
\newblock {\em preprint}, 2009.

\bibitem[IR90]{IrRo90}
Kenneth Ireland and Michael Rosen.
\newblock {\em A classical introduction to modern number theory}, volume~84 of
  {\em Graduate Texts in Mathematics}.
\newblock Springer-Verlag, New York, second edition, 1990.

\bibitem[JKS12]{MR2966145}
Palle E.~T. Jorgensen, Keri~A. Kornelson, and Karen~L. Shuman.
\newblock An operator-fractal.
\newblock {\em Numer. Funct. Anal. Optim.}, 33(7-9):1070--1094, 2012.

\bibitem[JKS14]{MR3202868}
Palle E.~T. Jorgensen, Keri~A. Kornelson, and Karen~L. Shuman.
\newblock Scalar spectral measures associated with an operator-fractal.
\newblock {\em J. Math. Phys.}, 55(2):022103, 23, 2014.

\bibitem[JP98]{JP98}
Palle E.~T. Jorgensen and Steen Pedersen.
\newblock Dense analytic subspaces in fractal {$L\sp 2$}-spaces.
\newblock {\em J. Anal. Math.}, 75:185--228, 1998.

\bibitem[{\L}W02]{MR1929508}
Izabella {\L}aba and Yang Wang.
\newblock On spectral {C}antor measures.
\newblock {\em J. Funct. Anal.}, 193(2):409--420, 2002.

\bibitem[Str00]{Str00}
Robert~S. Strichartz.
\newblock Mock {F}ourier series and transforms associated with certain {C}antor
  measures.
\newblock {\em J. Anal. Math.}, 81:209--238, 2000.

\bibitem[Str06]{MR2279556}
Robert~S. Strichartz.
\newblock Convergence of mock {F}ourier series.
\newblock {\em J. Anal. Math.}, 99:333--353, 2006.

\end{thebibliography}
\end{document}